\documentclass[a4paper,12pt]{amsart}

\usepackage{amsmath,amsfonts,amsthm,amssymb}
\usepackage{fullpage}   
\usepackage[all]{xy}



\theoremstyle{plain}    
\newtheorem{theorem}{Theorem}[section]
\newtheorem{lemma}[theorem]{Lemma}
\newtheorem{proposition}[theorem]{Proposition}
\newtheorem{corollary}[theorem]{Corollary}

\theoremstyle{definition}   
\newtheorem{definition}[theorem]{Definition}
\newtheorem{example}[theorem]{Example}
\newtheorem{remark}[theorem]{Remark}
\newtheorem{construction}[theorem]{Nullification Construction}
\newtheorem*{definition*}{Definition}
\newtheorem*{remark*}{Remark}

\theoremstyle{remark}   


\newcommand{\Int}{\mathbb{Z}}   

\newcommand{\cc}{\mathcal{C}}   
\newcommand{\ab}{\mathcal{A}}   
\newcommand{\bb}{\mathcal{B}}   
\newcommand{\Derived}{\mathcal{D}}      
\newcommand{\tc}{\mathcal{T}}   
\newcommand{\ff}{\mathcal{F}}   

\newcommand{\Hom}{\mathrm{Hom}}   
\newcommand{\ext}{\mathrm{Ext}}   
\newcommand{\End}{\mathrm{End}}   
\newcommand{\im}{\mathrm{im}}   
\newcommand{\chains}{\mathrm{C}}    
\newcommand{\colim}{\operatornamewithlimits{colim}} 
\newcommand{\ee}{\mathcal{E}}   
\newcommand{\cell}{\mathrm{Cell}}   
\newcommand{\Null}{\mathrm{Null}}   

\newcommand{\simples}{\mathbb{S}} 

\newcommand{\aideal}{\mathfrak{a}}

\newcommand{\problem}[1]{ }
\newcommand{\elaboration}[1]{}
\newcommand{\discard}[1]{}      



\begin{document}

\title{Colocalization Functors in Derived Categories and Torsion Theories}
\author{Shoham Shamir}
\address{Department of Mathematics, University of Bergen, 5008 Bergen, Norway}
\email{shoham.shamir@math.uib.no}
\date{\today}

\begin{abstract}
Let $R$ be a ring and let $\ab$ be a hereditary torsion class of $R$-modules. The inclusion of the localizing subcategory generated by $\ab$ into the derived category of $R$ has a right adjoint, denoted $\cell_\ab$. In~\cite{Benson}, Benson shows how to compute $\cell_\ab R$ when $R$ is a group ring of a finite group over a prime field and $\ab$ is the hereditary torsion class generated by a simple module. We generalize Benson's construction to the case where $\ab$ is any hereditary torsion class on $R$. It is shown that for every $R$-module $M$ there exists an injective $R$-module $E$ such that:
\[ H^n(\cell_\ab M) \cong \ext_{\End_R(E)}^{n-1}(\Hom_R(M,E),E) \text{ for } n\geq 2\]
\end{abstract}

\maketitle

\section{Introduction}
\label{sec: Introduction}

Let $R$ be a ring and let $\Derived_R$ be the (unbounded) derived category of chain complexes of left $R$-modules. Fix a class $\ab$ of objects of $\Derived_R$. We recall some definitions of Dwyer and Greenlees from~\cite{DwyerGreenlees}. An object $N$ of $\Derived_R$ is \emph{$\ab$-null} if $\ext^*_R(A,N)=0$ for every $A\in \ab$. An object $C$ of $\Derived_R$ is \emph{$\ab$-cellular} if $\ext^*_R(C,N)=0$ for every $\ab$-null $N$. An $\ab$-cellular object $C$ is an \emph{$\ab$-cellular approximation} of $X\in \Derived_R$ if there is a map $\mu:C \to X$ such that $\ext^*_R(A,\mu)$ is an isomorphism for all $A\in\ab$. Finally, an $\ab$-null object $N$ is an \emph{$\ab$-nullification} of $X$ if there is a map $\nu:X \to N$ which is universal among maps in $\Derived_R$ from $X$ to $\ab$-null objects. Denote an $\ab$-cellular approximation of $X$ by $\cell_\ab X$ and an $A$-nullification of $X$ by $\Null_\ab X$.

The following properties are easy to check. A map $\mu:C \to X$ is an $\ab$-cellular approximation of $X$ if and only if it is universal among all maps from $\ab$-cellular objects to $X$. There is an exact triangle $\cell_\ab X \to X \to \Null_\ab X$ whenever $\cell_\ab X$ or $\Null_\ab X$ exists. An $\ab$-cellular approximation of some object $X$ is unique up to isomorphism and the same goes for an $\ab$-nullification of $X$.

Now suppose $\ab$ is a set, then it turns out that the full subcategory of $\ab$-cellular objects is the localizing subcategory generated by $\ab$ (see~\cite{DwyerGreenleesIyengar} and \cite[5.1.5]{Hirschhorn}). Moreover, when $\ab$ is a set the inclusion functor of the full subcategory of $A$-cellular objects into $\Derived_R$ has a right adjoint, which is $\cell_\ab X$ for every $X\in \Derived_R$, see~\cite{Hirschhorn} or~\cite{FarjounBook}. Hence $\cell_\ab$ can be constructed as a \emph{colocalization} functor (the right adjoint of an inclusion functor), and it follows that $\ab$-cellular approximation and $\ab$-nullification exist for any object of $\Derived_R$.

Similarly, when $\ab$ is a set there exists a left adjoint to the inclusion of the full subcategory of $\ab$-null objects, see for example Neeman's book \cite[Section 9]{NeemanBook}. This functor is in fact $\ab$-nullification and it is a \emph{localization} functor (the left adjoint to an inclusion functor).

One method for calculating $\ab$-cellular approximations is the formula given by Dwyer and Greenlees in~\cite{DwyerGreenlees}, which holds whenever $\ab=\{A\}$ and $A$ is a perfect complex. This was later generalized by Dwyer, Greenlees and Iyengar in~\cite{DwyerGreenleesIyengar}. A new method for calculating the $\ab$-cellular approximation for $R$-modules has been constructed by Benson in~\cite{Benson}, dubbed \emph{$k$-squeezed resolutions}. This method can be applied whenever $\ab$ is a set of simple modules and $R$ is an Artinian ring. One major benefit of Benson's construction is that it allows for explicit calculations.

As we will see, it is more natural to use Benson's method to construct the $\ab$-nullification of a module, rather than its $\ab$-cellular approximation. We generalize Benson's construction so that it applies whenever $\ab$ is a hereditary torsion class of modules. A \emph{hereditary torsion class} of modules is a class of modules that is closed under submodules, quotient modules, coproducts and extensions. The main result of this paper is the following.

\begin{theorem}
\label{the: Nullification as double endomorphism}
Let $\tc$ be a hereditary torsion class on left $R$-modules. For every left $R$-module $M$ there exists an injective left $R$-module $E$ such that the complex
\[\mathbf{R}\Hom_{\End_R(E)}(\Hom_R(M,E),E)\]
is a $\tc$-nullification of $M$. In particular, the differential graded algebra $\mathbf{R}\End_{\End_R(E)}(E)$ is a $\tc$-nullification of $R$.
\end{theorem}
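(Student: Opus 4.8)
The plan is to exhibit the $\tc$-nullification as a biduality with respect to a carefully chosen torsion-free injective module. Let $(\tc,\ff)$ be the hereditary torsion theory determined by $\tc$, so the torsion-free class $\ff$ is closed under submodules, products, extensions and injective envelopes, and let $t$ be the associated torsion radical, $t(X)$ the largest submodule of $X$ lying in $\tc$. Given a left $R$-module $M$, the plan is to choose an injective $R$-module $E$ which is torsion-free and ``large enough'' — at the very least an injective cogenerator of $\ff$; the precise $E$ will be the object produced by the Nullification Construction. Set $S=\End_R(E)$ and regard $E$ and $\Hom_R(M,E)$ as $S$-modules in the evident way, so that $\mathbf{R}\Hom_S(\Hom_R(M,E),E)$ is a complex of left $R$-modules. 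Evaluation, $m\mapsto(\phi\mapsto\phi(m))$, gives a natural map $\eta_M\colon M\to\mathbf{R}\Hom_S(\Hom_R(M,E),E)$ in $\Derived_R$, and the goal is to show $\eta_M$ is a $\tc$-nullification; the ``in particular'' clause then follows at once by taking $M=R$, since $\Hom_R(R,E)=E$ and $\mathbf{R}\Hom_S(E,E)=\mathbf{R}\End_S(E)$.

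First I would verify that $N:=\mathbf{R}\Hom_S(\Hom_R(M,E),E)$ is $\tc$-null. For $A\in\tc$ one has a natural isomorphism
\[\mathbf{R}\Hom_R\big(A,\mathbf{R}\Hom_S(\Hom_R(M,E),E)\big)\ \cong\ \mathbf{R}\Hom_S\big(\Hom_R(M,E),\mathbf{R}\Hom_R(A,E)\big),\]
obtained by resolving $\Hom_R(M,E)$ by projective $S$-modules, noting that $\Hom_S(-,E)$ turns these into $R$-injectives, and applying the ordinary $\Hom$--$\Hom$ adjunction degreewise. Since $E$ is $R$-injective, $\mathbf{R}\Hom_R(A,E)=\Hom_R(A,E)$, which vanishes because $A$ is torsion and $E$ is torsion-free; hence $\ext^*_R(A,N)=0$ for all $A\in\tc$.

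Next I would analyse the fibre $F$ of $\eta_M$. The triangle $F\to M\to N$ together with $H^{\neq0}(M)=0$ gives $H^0(F)=\ker\big(M\to\Hom_S(\Hom_R(M,E),E)\big)$, $H^1(F)=\coker\big(M\to\Hom_S(\Hom_R(M,E),E)\big)$, and $H^n(F)\cong\ext^{n-1}_S(\Hom_R(M,E),E)$ for $n\geq2$; in particular $F$ has cohomology concentrated in degrees $\geq0$ (this also recovers the cohomology formula of the abstract). An element $m$ is killed by every $R$-linear map $M\to E$ precisely when $Rm$ is torsion: the inclusion $t(M)\subseteq H^0(F)$ holds since $\Hom_R(\text{torsion},E)=0$, and the reverse inclusion since $M/t(M)\in\ff$ and $E$ cogenerates $\ff$. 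Thus $H^0(F)=t(M)$, a torsion module. The crucial step — and the place where the particular choice of $E$ enters essentially — is to show that $H^n(F)$ is torsion for every $n\geq1$ as well; that is, that $\coker(M\to\Hom_S(\Hom_R(M,E),E))$ and the groups $\ext^{\geq1}_S(\Hom_R(M,E),E)$, with their left $R$-module structure coming from $E$, are torsion. Granting this, $F$ is a complex bounded below all of whose cohomology modules lie in $\tc$, hence a homotopy colimit of its canonical truncations, each a finite iterated extension of shifts of torsion modules; since the $\tc$-cellular objects form the localizing subcategory generated by $\tc$ (equivalently, by a set of cyclic modules generating it), $F$ is $\tc$-cellular. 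Then the triangle $F\to M\to N$ exhibits $N$ as the $\tc$-nullification of $M$ and $F$ as the $\tc$-cellular approximation: for any $\tc$-null $N'$ the sequence $\Hom_{\Derived_R}(F[1],N')\to\Hom_{\Derived_R}(N,N')\to\Hom_{\Derived_R}(M,N')\to\Hom_{\Derived_R}(F,N')$ has vanishing outer terms, so $\eta_M$ is universal among maps to $\tc$-null objects.

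The main obstacle is precisely the torsion-ness of the higher cohomology of $F$: controlling the double dual $\Hom_S(\Hom_R(M,E),E)$ and its derived corrections, which amounts to arranging that, modulo torsion, $\Hom_S(\Hom_R(M,E),E)$ is the $\tc$-closure of $M$ and that $\ext^{\geq1}_S(\Hom_R(M,E),E)$ consists of torsion modules. This is exactly what the choice of $E$ in the Nullification Construction is designed to achieve; the remaining ingredients — naturality of $\eta_M$, the adjunction isomorphism used for nullity, exactness of the relevant $\Hom$'s, and the truncation argument for cellularity — are routine.
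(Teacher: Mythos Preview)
Your outline correctly isolates the triangle $F\to M\to N$ and gives a valid adjunction argument that $N=\mathbf{R}\Hom_S(\Hom_R(M,E),E)$ is $\tc$-null whenever $E$ is a torsion-free injective. But there is a genuine gap at precisely the point you flag: you assert that $H^{n}(F)$ is torsion for all $n\geq 1$---equivalently, that the cokernel of $M\to\Hom_S(\Hom_R(M,E),E)$ and the modules $\ext^{n}_S(\Hom_R(M,E),E)$ are torsion---yet give no argument beyond the remark that the choice of $E$ ``is designed to achieve'' this. That is not quite accurate: the Nullification Construction~\ref{Construction} outputs a \emph{complex} $I$, not a single injective $E$, and nothing in it is aimed at controlling those $\ext$ groups directly. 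Indeed, the paper afterwards introduces the notion of an injective being ``sufficient to compute the $\tc$-nullification of $M$'' and gives nontrivial criteria for it, so one should not expect the claim to hold for an arbitrary torsion-free injective cogenerator.

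The paper's proof sidesteps your fibre analysis entirely. Having built $I=(I^0\to I^1\to\cdots)$ and already knowing from Lemma~\ref{lem: Construction of Nullification} that $M\to I$ is a $\tc$-nullification, one \emph{then} chooses $E$ so that every $I^n$ is a direct summand of a finite direct sum of copies of $E$ (for instance $E=\prod_n I^n$). Two things follow. First, each $\Hom_R(I^n,E)$ is a finitely generated projective $S$-module, so $\Hom_R(I,E)\to\Hom_R(M,E)$ is a projective resolution over $S$; it is a quasi-isomorphism because $E$, being injective and torsion-free, annihilates the torsion homology of the fibre of $M\to I$. Second, biduality $\Hom_S(\Hom_R(I^n,E),E)\cong I^n$ holds degreewise. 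Hence $\mathbf{R}\Hom_S(\Hom_R(M,E),E)\cong\Hom_S(\Hom_R(I,E),E)\cong I$ on the nose, and the theorem follows immediately. The idea missing from your proposal is exactly this finite-summand bookkeeping, which turns the derived double dual into an explicit computation rather than an abstract cohomology chase; your adjunction proof of nullity, while correct, then becomes unnecessary since nullity of $I$ was already established.
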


The formula given in the abstract follows immediately from the distinguished triangle $\cell_\tc M \to M \to \Null_\tc M$ mentioned above.



The layout of this paper is as follows. The necessary background on hereditary torsion classes and the background on cellular approximations and nullifications is given in Section~\ref{sec: Background}. In Section~\ref{sec: The Construction} we describe the construction of nullification with respect to a hereditary torsion class and prove Theorem~\ref{the: Nullification as double endomorphism}. We study the case where $R$ is an Artinian ring in Section~\ref{sec: Torsion Theories and Cellular Approximation in Artinian Rings}. This section offers a different proof to a result of Benson (\cite[Theorem 5.1]{Benson}). Finally, Section~\ref{sec: Examples} provides several examples.

\subsection{Notation and Terminology}
By a ring we always mean an associative ring with a unit, not necessarily commutative. Unless otherwise noted all modules considered are left modules. A triangle always means an exact (distinguished) triangle in the unbounded derived category of left $R$-modules, denoted $\Derived_R$. A complex is always a chain-complex of $R$-modules. For complexes we use the standard convention that subscript grading is the negative of the superscript grading, i.e. $\square_{-i}=\square^i$. It is taken for granted that every $R$-module is a complex concentrated in degree 0 and with zero differential. A complex $X$ is \emph{bounded-above} if for some $n$ and for all $i>n$, $H_i(X)=0$. For complexes $X$ and $Y$ the notation $\Hom_R(X,Y)$ stands for the usual chain complex of homomorphisms. The notation $\mathbf{R}\Hom_R(-,-)$ stands for the derived functor of the $\Hom_R(-,-)$ functor. By $\End_R(M)$ we mean the endomorphisms ring of an $R$-module $M$. The symbol $\simeq$ stands for quasi-isomorphism of complexes.

\section{Background on Hereditary Torsion Theories and Cellular-Approximation, Nullification and Completion}
\label{sec: Background}

\subsection{Hereditary Torsion Theories}
\label{sub: Recollection about Hereditary Torsion Theories}
Below is a recollection of the definition and main properties of hereditary torsion theories. A thorough review of this material can be found in \cite{Stenstrom}.

\begin{definition}
A \emph{hereditary torsion class} $\tc$ is a class of $R$-modules that is closed under submodules, quotient modules, coproducts and extensions. Closure under extensions means that if $0 \to M_1 \to M_2 \to M_3 \to 0$ is a short exact sequence with $M_1$ and $M_3$ in $\tc$, then so is $M_2$. The modules in $\tc$ will be called \emph{$\tc$-torsion modules} (or just torsion modules when the torsion theory is clear from the context). The class of \emph{torsion-free} modules $\ff$ is the class of all modules $F$ satisfying $\Hom_R(C,F)=0$ for every $C \in \tc$. The pair $(\tc,\ff)$ is referred to as a \emph{hereditary torsion theory}. To every hereditary torsion theory there is an associated radical $t$, where $t(M)$ is the maximal torsion submodule of $M$. Note that $M/t(M)$ is therefore torsion-free.
\end{definition}

Every hereditary torsion class $\tc$ has an \emph{injective cogenerator} (see~\cite[VI.3.7]{Stenstrom}). This means there exists an injective module $E$ such that a module $M$ is torsion if and only if $\Hom_R(M,E)=0$. It is also important to note that in any hereditary torsion theory, the class of torsion-free modules is closed under injective hulls (see~\cite[VI.3.2]{Stenstrom}). Thus, if $F$ is a torsion-free module then the injective hull of $F$ is also torsion-free.

\begin{definition}
\label{def: Module of quotients}
Let $(\tc,\ff)$ be a hereditary torsion theory and let $t$ be the associated radical. An $R$-module $M$ is called \emph{$\ff$-closed} if, for every left ideal $\aideal \subset R$ such that $R/\aideal \in \tc$, the induced map $M=\Hom_R(R,M) \to \Hom_R(\aideal,M)$ is an isomorphism. The inclusion of the full subcategory of $\ff$-closed modules has a left adjoint $M \mapsto M_\ff$. The module $M_\ff$ is called the \emph{module of quotients} of $M$ (see \cite[IX.1]{Stenstrom}). The unit of this adjunction has the following properties: the kernel of the map $M \to M_\ff$ is $t(M)$, $M_\ff$ is torsion-free and the cokernel of this map is a torsion module.
\end{definition}

\subsection{Cellular-Approximation, Nullification and Completion}
The following recalls the basic properties of cellular approximation, as well as the definition of completion given by Dwyer and Greenlees in~\cite{DwyerGreenlees}.

\begin{definition}
\label{def: cellularization, nullification and completion}
Let $R$ be a ring and let $\ab$ be a class of $R$-complexes. We say an $R$-complex $X$ is \emph{$\ab$-complete} if $\ext_R^*(N,X)=0$ for any $\ab$-null object $N$. An $R$-complex $C$ is an \emph{$\ab$-completion} of $X$ if $C$ is $\ab$-complete and there is an $\ab$-equivalence $X \to C$. It is easy to see that an $\ab$-completion of a complex $X$ is unique up to an isomorphism in $\Derived_R$. As in~\cite{DwyerGreenlees}, we denote an $\ab$-completion of $X$ by $X^\wedge_\ab$.
\end{definition}

The following criterion for nullification is usually easier to check than the original definition. Its proof is easy and therefore omitted.
\begin{lemma}
\label{lem: Triangle criterion for nullification}
Let $R$ be a ring and let $\ab$ be a class of $R$-complexes. A complex $N$ is an \emph{$\ab$-nullification} of $X$ if there is a triangle $C \to X \to N$ such that $C$ is $\ab$-cellular and $N$ is $\ab$-null. In this case it also follows that $C$ an $A$-cellular approximation of $X$.
\end{lemma}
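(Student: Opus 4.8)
The target is Lemma~\ref{lem: Triangle criterion for nullification}, so the task is to verify that if a triangle $C \to X \xrightarrow{\nu} N$ has $C$ that is $\ab$-cellular and $N$ that is $\ab$-null, then $\nu$ is an $\ab$-nullification of $X$ (and hence $C \to X$ is an $\ab$-cellular approximation). The plan is to appeal to the two defining features of a nullification: (i) $N$ is $\ab$-null, and (ii) the map $\nu \colon X \to N$ is universal among maps from $X$ to $\ab$-null objects. Condition (i) is given by hypothesis, so everything hinges on establishing the universal property (ii).

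For the universal property, let $N'$ be an arbitrary $\ab$-null object and let $f \colon X \to N'$ be any map in $\Derived_R$. I would apply the cohomological functor $\ext^*_R(-, N') = \Hom_{\Derived_R}(-, N'[*])$ to the triangle $C \to X \to N \to C[1]$, obtaining a long exact sequence
\[ \cdots \to \ext^*_R(C[1], N') \to \ext^*_R(N, N') \to \ext^*_R(X, N') \to \ext^*_R(C, N') \to \cdots . \]
Since $C$ is $\ab$-cellular and $N'$ is $\ab$-null, by definition $\ext^*_R(C, N') = 0$, and likewise $\ext^*_R(C[1], N') = 0$. Exactness then forces $\ext^*_R(N, N') \to \ext^*_R(X, N')$ to be an isomorphism in every degree; in particular in degree $0$, precomposition with $\nu$ gives a bijection $\Hom_{\Derived_R}(N, N') \xrightarrow{\ \cong\ } \Hom_{\Derived_R}(X, N')$. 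Thus $f$ factors uniquely through $\nu$, which is exactly the universal property. Combined with the fact that $N$ is $\ab$-null, this shows $N$ is an $\ab$-nullification of $X$.

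For the final assertion that $C \to X$ is then an $\ab$-cellular approximation: $C$ is $\ab$-cellular by hypothesis, so it remains to check that $\ext^*_R(A, C) \to \ext^*_R(A, X)$ is an isomorphism for every $A \in \ab$. Apply $\ext^*_R(A, -)$ to the same triangle to get the long exact sequence relating $\ext^*_R(A, C)$, $\ext^*_R(A, X)$, and $\ext^*_R(A, N)$; since $N$ is $\ab$-null we have $\ext^*_R(A, N) = 0$ for all $A \in \ab$, and exactness gives the desired isomorphism $\ext^*_R(A, C) \cong \ext^*_R(A, X)$. Hence $C$ is an $\ab$-cellular approximation of $X$.

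There is no serious obstacle here — the argument is a routine diagram chase with the defining long exact sequences of a triangle, using only the definitions of $\ab$-null and $\ab$-cellular recalled in the introduction. The only point requiring minor care is to run the argument in all cohomological degrees simultaneously (equivalently, after shifting $A$ or $N'$), so that one genuinely obtains the graded vanishing and graded isomorphism rather than just the degree-zero statement; this is immediate since $\ab$-cellularity and $\ab$-nullity are conditions on $\ext^*_R$, not merely $\Hom_{\Derived_R}$.
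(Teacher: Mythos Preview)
Your argument is correct and is exactly the routine verification one would expect: apply $\ext^*_R(-,N')$ and $\ext^*_R(A,-)$ to the triangle and use the vanishing given by the definitions of $\ab$-cellular and $\ab$-null. The paper itself omits the proof entirely (``Its proof is easy and therefore omitted''), so there is nothing to compare against; your write-up supplies precisely the intended justification.
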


Recall that when $\ab$ is a set, the full subcategory of $\ab$-cellular objects of $\Derived_R$ is the localizing subcategory of generated by $A$. The \emph{localizing category generated by $\ab$}, denoted $\langle \ab \rangle$, is the smallest full triangulated subcategory of $\Derived_R$ that is closed under triangles, direct sums and retracts. Closure under triangles means that for every distinguished triangle in $\Derived_R$, if two of the objects are in the localizing subcategory, so is the third. The proof of the following lemma is clear.

\begin{lemma}
Let $\ab$ be a class of $R$-complexes, then every object of $\langle \ab \rangle$ is $\ab$-cellular. If $\bb$ is another class of $R$-complexes such that $\langle \ab \rangle= \langle \bb \rangle$, then $\ab$-cellular approximation is the same as $\bb$-cellular approximation.
\end{lemma}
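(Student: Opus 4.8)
The statement to prove is the lemma that every object of $\langle \ab \rangle$ is $\ab$-cellular, together with the "base-independence" assertion that $\langle \ab \rangle = \langle \bb \rangle$ implies $\ab$-cellular approximation coincides with $\bb$-cellular approximation. The author calls the proof "clear," so I want a short, clean argument that just unwinds the definitions and uses the closure properties of the localizing subcategory.

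For the first claim, the plan is to recall that $C$ is $\ab$-cellular precisely when $\ext^*_R(C,N)=0$ for every $\ab$-null $N$, and to fix an arbitrary $\ab$-null object $N$. Consider the full subcategory $\cc_N$ of $\Derived_R$ consisting of those $C$ with $\ext^*_R(C,N)=0$; equivalently, since $\ext^n_R(C,N)\cong \Hom_{\Derived_R}(C,\Sigma^n N)$ and shifts of $N$ are again $\ab$-null (nullity is a shift-invariant condition), $\cc_N$ consists of the $C$ with $\Hom_{\Derived_R}(C,\Sigma^n N)=0$ for all $n\in\Int$. Every $A\in\ab$ lies in $\cc_N$ by definition of $\ab$-null. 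Now I would check that $\cc_N$ is a localizing subcategory: it is closed under shifts trivially; it is closed under triangles because $\Hom_{\Derived_R}(-,\Sigma^n N)$ is a cohomological functor, so a long exact sequence forces the middle vanishing group to vanish when the outer two do; it is closed under arbitrary direct sums because $\Hom_{\Derived_R}(\bigoplus_i C_i,\Sigma^nN)\cong\prod_i\Hom_{\Derived_R}(C_i,\Sigma^nN)$; and it is closed under retracts because a retract of a zero group is zero. Hence $\langle\ab\rangle\subseteq\cc_N$. Since $N$ was arbitrary, every object of $\langle\ab\rangle$ is $\ab$-cellular.

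For the second claim, suppose $\langle\ab\rangle=\langle\bb\rangle$. The key observation is that an object $N$ is $\ab$-null if and only if it is $\bb$-null: indeed $N$ is $\ab$-null iff $\ext^*_R(A,N)=0$ for all $A\in\ab$, and by the first part (applied with the roles of the generating class played by $\ab$) one shows the class of objects $C$ with $\ext^*_R(C,N)=0$ is localizing and contains $\ab$, hence contains $\langle\ab\rangle=\langle\bb\rangle\supseteq\bb$, so $N$ is $\bb$-null; the reverse implication is symmetric. Consequently the classes of $\ab$-null and $\bb$-null objects coincide, whence the classes of $\ab$-cellular and $\bb$-cellular objects coincide, and the universal property defining $\cell_\ab X$ (universal among maps from $\ab$-cellular objects to $X$, as noted in the introduction) is literally the same universal property as that defining $\cell_\bb X$. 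Uniqueness of such universal objects then gives $\cell_\ab X\cong\cell_\bb X$ for all $X$.

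**Main obstacle.** There is essentially no serious obstacle here — the whole proof is a formal diagram chase through the definitions, and the only point requiring a moment's care is verifying the closure properties of $\cc_N$ (particularly that $\Hom_{\Derived_R}(-,-)$ sends coproducts in the first variable to products, and that it is cohomological so triangles give long exact sequences). These are standard facts about triangulated categories with coproducts. If one wanted to be fully rigorous about the second claim one should also note that $\Sigma^nN$ is $\ab$-null whenever $N$ is, which is immediate since $\ext^{*}_R(A,\Sigma^nN)$ is just a reindexing of $\ext^{*}_R(A,N)$; this is the only place the "graded" ($\ext^*$ rather than $\ext^0$) nature of the nullity condition gets used.
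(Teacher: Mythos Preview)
Your proof is correct and is exactly the straightforward unwinding of definitions that the paper has in mind when it says ``the proof of the following lemma is clear'' (the paper gives no further argument). The only thing you add beyond what the author leaves implicit is the explicit verification that $\cc_N=\{C:\ext^*_R(C,N)=0\}$ is localizing and the deduction that $\ab$-null and $\bb$-null objects coincide; both are routine and handled correctly.
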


\begin{remark}
In~\cite{DwyerGreenlees} $\ab$-cellular complexes were called \emph{$\ab$-torsion} while the term \emph{$\ab$-cellular} was reserved for complexes in $\langle\ab\rangle$. When $\tc$ is a hereditary torsion theory, the two terms agree (by Lemma~\ref{lem: Every torsion is cellular} below).
\end{remark}

\subsection{Cellular-Approximation with respect to a Hereditary Torsion Theory}
\label{sec: Cellular-Approximation with respect to a Hereditary Torsion Theory}

Let $\tc$ be a hereditary torsion class. It is not immediately apparent that $\tc$-cellular approximation exists. Below, in Lemma~\ref{lem: Every torsion is cellular}, we show that $\langle\tc\rangle$ is the same as the localizing subcategory generated by a set $\ab_\tc$. This immediately implies that $\tc$-cellular approximation and $\tc$-nullification exist for any $R$-complex, see Corollary~\ref{cor: Consequence of being generated by AT}.

\begin{definition}
Given a hereditary torsion class of $R$-modules $\tc$, we denote by $\ab_\tc$ the set of all cyclic $\tc$-torsion modules.
\end{definition}

\begin{lemma}
\label{lem: Every torsion is cellular}
Let $\tc$ be a hereditary torsion class, then every $\tc$-torsion module is $\ab_\tc$-cellular and hence $\langle\tc\rangle=\langle\ab_\tc\rangle$.
\end{lemma}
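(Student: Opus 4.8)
The plan is to realize an arbitrary $\tc$-torsion module as something built out of cyclic torsion modules by operations under which $\ab_\tc$-cellular complexes are closed, and then to read off the equality of localizing subcategories. Here I use the fact recalled above that $\ab_\tc$ is a set — cyclic modules are classified up to isomorphism by left ideals of $R$ — so the full subcategory of $\ab_\tc$-cellular complexes is exactly $\langle\ab_\tc\rangle$, and in particular it is closed under triangles, coproducts and homotopy colimits.

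First, if $C$ is a cyclic $\tc$-torsion module then $C\in\ab_\tc$ by definition, so $C$ is $\ab_\tc$-cellular. Next I would treat a finitely generated $\tc$-torsion module $M$ with generators $x_1,\dots,x_k$: the chain of submodules $M_i = Rx_1+\cdots+Rx_i$ has $M_0=0$, $M_k=M$, and cyclic successive quotients $M_i/M_{i-1}$. Since $\tc$ is closed under submodules and quotients, every $M_i$ is torsion and every $M_i/M_{i-1}$ is a cyclic torsion module, hence $\ab_\tc$-cellular; applying the long exact $\ext$-sequence to the triangles coming from $0\to M_{i-1}\to M_i\to M_i/M_{i-1}\to 0$ and inducting on $i$ shows each $M_i$, and in particular $M$, is $\ab_\tc$-cellular. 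For a general $\tc$-torsion module $M$, I would write $M=\colim_i M_i$ as the filtered colimit of its finitely generated submodules; each $M_i$ is torsion (a submodule of $M$) and hence $\ab_\tc$-cellular by the previous step. Since filtered colimits are exact in $\Rmod$, this colimit coincides with $\hocolim_i M_i$ in $\Derived_R$, and $\langle\ab_\tc\rangle$ is closed under homotopy colimits, so $M$ is $\ab_\tc$-cellular.

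Finally, the equality $\langle\tc\rangle=\langle\ab_\tc\rangle$ is formal: the above shows $\tc\subseteq\langle\ab_\tc\rangle$, hence $\langle\tc\rangle\subseteq\langle\ab_\tc\rangle$, while $\ab_\tc\subseteq\tc$ gives the reverse inclusion. The only point that is not pure bookkeeping is the passage from finitely generated to arbitrary torsion modules in the last step, where one must know that a filtered colimit of modules really is its homotopy colimit (equivalently, that homology commutes with filtered homotopy colimits) and that a localizing subcategory is closed under such colimits; the rest is just the torsion-class closure axioms and the long exact sequence of a triangle.
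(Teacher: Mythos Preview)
Your argument is correct. The only non-formal step, as you note, is the identification of the filtered colimit with the homotopy colimit and the closure of a localizing subcategory under such; both are standard and you invoke them appropriately.

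The paper takes a different route: rather than filtering $M$ by finitely generated submodules, it builds a resolution of $M$ by direct sums of cyclic torsion modules. Concretely, it covers any torsion module $M$ by the epimorphism $C(M)=\bigoplus_{m\in M} R/\mathrm{ann}(m)\twoheadrightarrow M$, observes that each $R/\mathrm{ann}(m)$ is cyclic torsion (since $\tc$ is closed under submodules), and iterates on kernels to obtain a complex $X$ quasi-isomorphic to $M$ whose terms lie in $\langle\ab_\tc\rangle$; then $X\in\langle\ab_\tc\rangle$ since each brutal truncation is and $X$ is their homotopy colimit. Your approach trades this resolution for a two-stage filtration (cyclic $\Rightarrow$ finitely generated via short exact sequences, then finitely generated $\Rightarrow$ arbitrary via a filtered colimit). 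The paper's version avoids the intermediate finitely generated case and is arguably more uniform, while yours makes explicit the closure properties being used at each stage; both ultimately rest on the same facts about localizing subcategories and homotopy colimits.
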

\begin{proof}
Clearly, every cyclic $\tc$-torsion module is $\ab_\tc$-cellular. Therefore every direct sum of cyclic $\tc$-torsion modules is $\ab_\tc$-cellular. Let $M$ be a $\tc$-torsion module, then there is a surjection $C(M)=\oplus_{m\in M} R/\mathrm{ann}(m) \twoheadrightarrow M$. Since every hereditary torsion theory is closed under submodules, $R/\mathrm{ann}(m)$ is $\tc$-torsion for every $m\in M$. Clearly $C(M)$ is $\ab_\tc$-cellular and $\tc$-torsion. Next we build a resolution $X$ of $M$ using $\ab_\tc$-cellular modules. Let $X_0=C(M)$, and let $d_0:X_0 \twoheadrightarrow M$ the map defined above. The kernel of $d_0$ is $\tc$-torsion, so there is an epimorphism $C(\ker(d_0)) \twoheadrightarrow \ker(d_0)$. Let $X_1=C(\ker(d_0))$ and let $d_1$ be the composition $X_1 \twoheadrightarrow \ker(d_0) \hookrightarrow X_0$. In this way $X$ is built inductively and it is clear that $X$ is quasi-isomorphic to $M$. By construction, $X$ is in the localizing subcategory generated by $\ab_\tc$.
\end{proof}

\begin{corollary}
\label{cor: Consequence of being generated by AT}
Let $\tc$ be a hereditary torsion class, then $\tc$-cellular approximation, $\tc$-nullification and $\tc$-completion exist for every complex. Moreover, a complex $X$ is $\tc$-cellular if and only if $X\in\langle\tc\rangle$.
\end{corollary}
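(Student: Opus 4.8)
The plan is to transfer every assertion over to the \emph{set} $\ab_\tc$ and then quote the machinery recalled in Section~\ref{sec: Introduction}. The first and essentially only substantive step is to check that replacing $\tc$ by $\ab_\tc$ changes none of the notions involved. By Lemma~\ref{lem: Every torsion is cellular} we already have $\langle\tc\rangle=\langle\ab_\tc\rangle$, and I would next observe that a complex $N$ is $\tc$-null if and only if it is $\ab_\tc$-null: for fixed $N$ the class $\{A\in\Derived_R : \ext^*_R(A,N)=0\}$ is a localizing subcategory of $\Derived_R$, since $\ext^*_R(-,N)$ sends triangles to long exact sequences and coproducts to products; hence it contains $\tc$ iff it contains $\langle\tc\rangle=\langle\ab_\tc\rangle$ iff it contains $\ab_\tc$. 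Because ``$\ab$-cellular'', ``$\ab$-complete'' and ``$\ab$-equivalence'', and therefore also $\ab$-cellular approximation, $\ab$-nullification and $\ab$-completion, are all defined purely in terms of the class of $\ab$-null complexes, it suffices to prove the corollary with $\ab_\tc$ in place of $\tc$.

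Since $\ab_\tc$ is a set, the cited results apply verbatim. By \cite{DwyerGreenleesIyengar} and \cite[5.1.5]{Hirschhorn} the $\ab_\tc$-cellular complexes are exactly the objects of $\langle\ab_\tc\rangle$; together with the reduction above and $\langle\ab_\tc\rangle=\langle\tc\rangle$ this gives the ``moreover'' clause. The same references (or \cite{Hirschhorn,FarjounBook}) provide a right adjoint to the inclusion $\langle\ab_\tc\rangle\hookrightarrow\Derived_R$, which is $\ab_\tc$-cellular approximation, so $\tc$-cellular approximation exists. For nullification one may either quote \cite[Section 9]{NeemanBook} for the existence of a left adjoint to the inclusion of the $\ab_\tc$-null complexes, or, more cheaply, complete the counit $\cell_{\ab_\tc}X\to X$ to a triangle $\cell_{\ab_\tc}X\to X\to N$: applying $\ext^*_R(A,-)$ to this triangle and using that $\cell_{\ab_\tc}X\to X$ is an $\ab_\tc$-equivalence shows that $N$ is $\ab_\tc$-null, whence $N=\Null_{\ab_\tc}X$ by Lemma~\ref{lem: Triangle criterion for nullification}.

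It remains to produce $\tc$-completion, and this is the point I would be most careful about, since — unlike cellular approximation and nullification — $\tc$-completion is not manifestly the (co)localization onto a subcategory described in an obvious way by $\ab_\tc$. Here I would invoke the fact that, for a set $\ab_\tc$, the $\ab_\tc$-completion exists, as part of the Dwyer--Greenlees framework (\cite{DwyerGreenlees}), and then conclude once more from the first paragraph that the $\ab_\tc$-completion functor depends only on $\langle\tc\rangle$ and hence is also the $\tc$-completion. If a more self-contained argument is wanted, one can instead build $\ab_\tc$-completion as a left Bousfield localization of $\chr$, using that $\chr$ is a cofibrantly generated (indeed combinatorial) model category whose fibrant objects, after the localization, are precisely the $\ab_\tc$-complete complexes; I expect the verification that this localization computes the completion in the sense of Definition~\ref{def: cellularization, nullification and completion} to be the most technical part of the argument.
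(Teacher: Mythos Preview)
Your proposal is correct and follows essentially the same route as the paper: reduce everything to the set $\ab_\tc$ via Lemma~\ref{lem: Every torsion is cellular} and then invoke the general existence results for sets recalled in Section~\ref{sec: Introduction}. The paper's own proof is far terser---it handles cellular approximation explicitly and dispatches nullification, completion, and the ``moreover'' clause with ``the proof of the other claims is similar''---so your explicit argument that $\tc$-null and $\ab_\tc$-null coincide, and your flagged caution about the existence of completion, are elaborations in the same spirit rather than a different strategy.
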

\begin{proof}
Lemma~\ref{lem: Every torsion is cellular} implies that $\tc$-cellular approximation is the same as $\ab_\tc$-cellular approximation. As mentioned in Section~\ref{sec: Introduction}, $\ab_\tc$-cellular approximation exists for every complex. The proof of the other claims is similar.
\end{proof}

\begin{lemma}
\label{lem: Bounded-above is cellular}
Let $\tc$ be a hereditary torsion class.
\begin{enumerate}
\item If $X$ is a $\tc$-cellular complex then the homology groups of $X$ are $\tc$-torsion $R$-modules.
\item If $X$ is a bounded-above complex such that the homology groups of $X$ are $\tc$-torsion then $X$ is $\tc$-cellular.
\end{enumerate}
\end{lemma}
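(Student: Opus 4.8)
To prove part (1), the plan is to show that the class $\dd$ consisting of all $R$-complexes whose homology modules all lie in $\tc$ is a localizing subcategory of $\Derived_R$ containing $\tc$. Granting this, the claim follows at once: by Corollary~\ref{cor: Consequence of being generated by AT} the $\tc$-cellular complexes are precisely the objects of $\langle\tc\rangle$, which is contained in $\dd$. That $\dd$ is closed under direct sums is clear, since homology commutes with coproducts and $\tc$ is closed under coproducts; it is closed under retracts because a homology module of a retract of $Y$ is a direct summand, hence a submodule, of the corresponding homology module of $Y$. For closure under triangles, suppose $X\to Y\to Z$ is a triangle two of whose terms lie in $\dd$; after rotating we may assume that $H_*(X)$ and $H_*(Z)$ are torsion and that we must show the same for $H_*(Y)$. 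The long exact homology sequence then presents, for each $n$, the module $H_n(Y)$ as an extension of a submodule of $H_n(Z)$ by a quotient of $H_n(X)$, so $H_n(Y)\in\tc$ because $\tc$ is closed under submodules, quotients and extensions. Finally each module of $\tc$, viewed as a complex concentrated in degree $0$, lies in $\dd$, so $\tc\subseteq\dd$ and hence $\langle\tc\rangle\subseteq\dd$.

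For part (2), let $X$ be bounded above with torsion homology, say $H_i(X)=0$ for $i>n$. The plan is to exhibit $X$ as an object of $\langle\tc\rangle$ by means of the homology truncation tower. Writing $\tau_{\geq k}X$ for the good truncation of $X$ retaining $H_i$ for $i\geq k$, one has natural maps $\tau_{\geq k+1}X\to\tau_{\geq k}X\to X$, triangles $\tau_{\geq k+1}X\to\tau_{\geq k}X\to Q_k$ in which $Q_k$ is quasi-isomorphic to the module $H_k(X)$ placed in degree $k$, and $\tau_{\geq n+1}X\simeq 0$ by the bound on the homology. Each $H_k(X)$ is $\tc$-torsion, hence $\tc$-cellular by Lemma~\ref{lem: Every torsion is cellular}, so $Q_k$ lies in $\langle\tc\rangle$, and a finite downward induction on $k$ — starting from $\tau_{\geq n+1}X\simeq 0$ and using at each step that $\langle\tc\rangle$ is closed under triangles — gives $\tau_{\geq k}X\in\langle\tc\rangle$ for every $k\leq n$. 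Finally I would invoke the standard fact that $X\simeq\hocolim_{k\to-\infty}\tau_{\geq k}X$; the induced map is an isomorphism on homology since in each fixed degree $i$ the map $H_i(\tau_{\geq k}X)\to H_i(X)$ is already an isomorphism once $k\leq i$. Since a sequential homotopy colimit of objects of $\langle\tc\rangle$ again lies in $\langle\tc\rangle$ — it is the cofiber of an endomorphism of the coproduct of those objects, and $\langle\tc\rangle$ is closed under coproducts and triangles — we conclude $X\in\langle\tc\rangle$, that is, $X$ is $\tc$-cellular (Corollary~\ref{cor: Consequence of being generated by AT}).

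The verifications of the truncation triangles and of the homology computation for the homotopy colimit are routine. The essential use of the bounded-above hypothesis is that it forces each $\tau_{\geq k}X$ to have only finitely many non-vanishing homology modules, so that the downward induction terminates and produces $\tau_{\geq k}X\in\langle\tc\rangle$; this is the one place where care is needed, the remainder being an assembly of the long exact sequence together with the closure properties of $\tc$ and of $\langle\tc\rangle$.
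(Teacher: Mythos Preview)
Your proof is correct and follows the same route as the paper. For part~(1) you give the same localizing-subcategory argument, just with the closure properties spelled out; for part~(2) the paper simply cites \cite[5.2]{DwyerGreenlees} for the fact that a bounded-above complex lies in the localizing subcategory generated by its homology, and your truncation-tower/hocolim argument is exactly the standard proof of that fact.
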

\begin{proof}
Let $\cc$ be the full subcategory of $\Derived_R$ containing all objects whose homology groups are $\tc$-torsion $R$-modules. The properties of a hereditary torsion theory show that $\cc$ is localizing subcategory. Since $\cc$ contains $\tc$, then $\cc$ also contains $\langle \tc \rangle$. This proves the first statement.

Now suppose $X$ is a bounded-above complex and that $H_i X \in \tc$ for all $i$. Because $X$ is bounded-above, $X$ belongs to the localizing subcategory generated by the homology groups of $X$ (see for example~\cite[5.2]{DwyerGreenlees}). Since the homology groups of $X$ all belong to $\langle \tc \rangle$, so does $X$.
\end{proof}

\begin{remark}
If $R$ is a commutative Noetherian ring, then a complex $X$ is $\tc$-cellular if and only if all the homology groups of $X$ are $\tc$-torsion. This easily follows from a result of Neeman~\cite[Theorem 2.8]{NeemanChromatic}. However, Example~\ref{exa: Noncommutative} shows a noncommutative ring $R$ and a complex $X$ such that $H_i(X)$ is $\tc$-torsion for all $i$ but $X$ is not $\tc$-cellular.
\end{remark}

\section{Nullification Construction}
\label{sec: The Construction}

In \cite{Benson}, Benson gives a construction called \emph{$k$-squeezed resolution} which yields $k$-cellular approximations over the ring $kG$, where $k$ is a prime field and $G$ is a finite group. We generalize Benson's construction so as to produce $\tc$-cellular approximations over any ring $R$, where $\tc$ is a hereditary torsion class. In fact, we give two isomorphic constructions.

\begin{construction}
\label{Construction}
Let $(\tc,\ff)$ be a hereditary torsion theory with radical $t$. For an $R$-module $M$ we construct the $\tc$-nullification of $M$ as a cochain complex $I^0 \xrightarrow{d} I^1 \xrightarrow{d} I^2 \xrightarrow{d} \cdots$ inductively.

Let $M^0=M$, let $F^0=M^0/t(M^0)$ and let $I^0$ be the injective hull of $F^0$. Note that since $F^0$ is torsion-free, so is $I^0$. We proceed by induction, set
\[M^{n+1}=I^n/F^n, \qquad F^{n+1}=M^{n+1}/t(M^{n+1})\]
and let $I^{n+1}$ be the injective hull of $F^{n+1}$. Again $I^{n+1}$ is torsion-free because $F^{n+1}$ is. The differential $d:I^n \to I^{n+1}$ is the composition $I^n \to M^{n+1} \to F^{n+1} \to I^{n+1}$. The image of $d:I^n \to I^{n+1}$ is $F^{n+1}$ and therefore $d\circ d=0$. Denote the resulting complex by $I$. The natural map $M \to I^0$ extends to a map of complexes $M\to I$.
\end{construction}

\begin{construction}
\label{Construction2}
For an $R$-module $M$ we construct a cochain complex $J^0 \xrightarrow{d^0} J^1 \xrightarrow{d^1} J^2 \xrightarrow{d^2} \cdots$ inductively.

Let $Q^0=M$, let $N^0=(Q^0)_\ff$ and let $J^0$ be the injective hull of $N^0$. Denote by $d^{-1}$ the map $M \to J^0$. Now proceed by induction, let
\[Q^{n+1}=J^n/\im(d^{n-1}), \qquad N^{n+1}=(Q^{n+1})_\ff\]
and let $J^{n+1}$ be the injective hull of $J^n$. The differential $d^n:J^n \to J^{n+1}$ is the composition $J^n \to Q^{n+1} \to N^{n+1} \to J^{n+1}$. Clearly, $d^{n+1}\circ d^n=0$. Denote the resulting complex by $J$. The natural map $M \to J^0$ extends to a map of complexes $M\to J$. Note that for every $n$, $J^n$ is torsion-free because $N^n$ is.
\end{construction}

\begin{lemma}
Let $J$ be the complex constructed from $M$ in~\ref{Construction2}, then $H_0(J) \cong M_\ff$.
\end{lemma}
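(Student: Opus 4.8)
The plan is to identify $H_0(J)$ with the kernel of the first differential $d^0\colon J^0\to J^1$, and then to show that this kernel is precisely $N^0=M_\ff$.

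\textbf{Step 1 (reduction and unwinding the differential).} With the sign convention $\square_{-i}=\square^i$ the complex $J$ is concentrated in non-positive homological degrees, so $H_0(J)=\ker(d^0)$. By construction $d^0$ factors as
\[ J^0 \xrightarrow{\ \pi\ } Q^1 \xrightarrow{\ q\ } N^1 \hookrightarrow J^1, \]
where $\pi$ is the quotient map by $\bar M:=\im(d^{-1})$, the map $q$ is the unit $Q^1\to(Q^1)_\ff$, and $N^1\hookrightarrow J^1$ is the inclusion of $N^1$ into its injective hull, hence a monomorphism. Thus $\ker(d^0)=\pi^{-1}(\ker q)$, and by Definition~\ref{def: Module of quotients} one has $\ker q=t(Q^1)$, so $\ker(d^0)=\pi^{-1}\big(t(Q^1)\big)$. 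Since $Q^1=J^0/\bar M$, this preimage is the unique submodule $K$ with $\bar M\subseteq K\subseteq J^0$, with $K/\bar M$ torsion, and with $J^0/K\cong Q^1/t(Q^1)$ torsion-free; the uniqueness is immediate from the observation that a torsion submodule of a torsion-free module vanishes.

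\textbf{Step 2 (identifying $K$ with $M_\ff$).} It then suffices to check that $N^0=M_\ff$ has these three properties. First, $\bar M=\im(M\to M_\ff)$ is a submodule of $M_\ff=N^0$, and $N^0/\bar M$ is the cokernel of the unit $M\to M_\ff$, which is torsion by Definition~\ref{def: Module of quotients}. Second, I would show $J^0/N^0$ is torsion-free. Here $J^0$ is the injective hull of the torsion-free module $N^0=M_\ff$, so $J^0$ is itself torsion-free because $\ff$ is closed under injective hulls; hence $\Hom_R(R/\aideal,J^0)=0$ for every left ideal $\aideal$ with $R/\aideal\in\tc$. Moreover $N^0$ is $\ff$-closed, and applying $\Hom_R(-,N^0)$ to $0\to\aideal\to R\to R/\aideal\to0$ shows this is equivalent to $\ext^1_R(R/\aideal,N^0)=0$ (together with $\Hom_R(R/\aideal,N^0)=0$) for all such $\aideal$. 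Feeding $0\to N^0\to J^0\to J^0/N^0\to0$ into $\Hom_R(R/\aideal,-)$ then forces $\Hom_R(R/\aideal,J^0/N^0)=0$; since $\tc$ is closed under submodules and any homomorphism out of $T\in\tc$ is determined by its restrictions to the cyclic submodules $Rx\cong R/\mathrm{ann}(x)\in\tc$, it follows that $\Hom_R(T,J^0/N^0)=0$ for all $T\in\tc$, i.e.\ $J^0/N^0$ is torsion-free. By the uniqueness in Step 1, $\ker(d^0)=N^0=M_\ff$, which is the claim.

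\textbf{Main obstacle.} The only mildly delicate point is the torsion-freeness of $E(N^0)/N^0$ for the $\ff$-closed module $N^0$. The $\ext^1$-vanishing argument above makes this self-contained, and it is in any case standard, being implicit in the construction of the module of quotients in \cite[IX.1]{Stenstrom}.
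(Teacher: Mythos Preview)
Your proof is correct and follows essentially the same plan as the paper's: both reduce to showing that $J^0/M_\ff$ is torsion-free, so that in the short exact sequence $0\to M_\ff/\bar M\to Q^1\to J^0/M_\ff\to 0$ the torsion part of $Q^1$ is exactly $M_\ff/\bar M$, whence $\ker(d^0)=M_\ff$. The only difference is in the justification of that torsion-freeness: the paper invokes left exactness of the module-of-quotients functor $(-)_\ff$ (applied to $0\to M_\ff\to J^0\to J^0/M_\ff\to 0$, using $(M_\ff)_\ff\cong M_\ff$ and $(J^0)_\ff\cong J^0$), whereas you extract the equivalent $\ext^1_R(R/\aideal,M_\ff)=0$ directly from the definition of $\ff$-closed and push it through the long exact sequence; both arguments are valid and of comparable length.
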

\begin{proof}
It easily follows from the definition of an $\ff$-closed module that any injective torsion-free module is $\ff$-closed, therefore $J^0$ is $\ff$-closed. For any $\ff$-closed module $K$ there is an isomorphism $K\cong K_\ff$ (see~\cite[page 198]{Stenstrom}), therefore $(J^0)_\ff\cong J^0$ and $(M_\ff)_\ff\cong M_\ff$.

The module of quotients functor is left exact (see~\cite[page 199]{Stenstrom}). Hence applying the module of quotients functor to the sequence $M_\ff \to J^0 \to J^0/M_\ff$ yields an exact sequence:
\[ 0 \to M_\ff \to J^0 \to (J^0/M_\ff)_\ff\]
We see that $J^0/M_\ff$ is torsion-free, because it is isomorphic to a submodule of the torsion-free module $(J^0/M_\ff)_\ff$.

Now consider the short exact sequence
\[ M_\ff/\im(M) \to Q^1 \to J^0/M_\ff\]
The module $M_\ff/\im(M)$ is a torsion module (see Definition~\ref{def: Module of quotients}), while the module $J^0/M_\ff$ is torsion free. From the definition of the radical $t$ it follows that $M_\ff/\im(M) \cong t(Q^1)$. Therefore $M_\ff$ is the kernel of $J^0 \to N^1$ and the proof is complete.
\end{proof}

\begin{lemma}
\label{lem: Construction of Nullification}
Let $M$ be an $R$-module, let $I$ be the complex constructed from $M$ in~\ref{Construction}, let $C$ be a complex such that $C \to M \to I$ is a distinguished triangle and let $J$ be the complex constructed from $M$ in~\ref{Construction2}. Then $C$ is a $\tc$-cellular approximation of $M$ and both $I$ and $J$ are $\tc$-nullifications of $M$. In particular, $H_0(\Null_\tc M) \cong M_\ff$ for any $R$-module $M$.
\end{lemma}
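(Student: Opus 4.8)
The plan is to verify the two ingredients demanded by Lemma~\ref{lem: Triangle criterion for nullification}: that $I$ (respectively $J$) is $\tc$-null, and that the cofiber $C$ of the map $M\to I$ is $\tc$-cellular. Since $I$ and $J$ are both nullifications of $M$, they are isomorphic in $\Derived_R$, so it suffices to treat one of them; I would work with $I$ from Construction~\ref{Construction}, since its terms are built directly from the radical $t$, and then remark that the same argument (using the module of quotients in place of $t$, and the preceding lemma identifying $H_0(J)\cong M_\ff$) applies to $J$. The final assertion $H_0(\Null_\tc M)\cong M_\ff$ then follows because $J$ is a $\tc$-nullification and $H_0(J)\cong M_\ff$ by the previous lemma.

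\emph{Step 1: $I$ is $\tc$-null.} I need $\ext^*_R(A,I)=0$ for every $A\in\tc$, equivalently (by Lemma~\ref{lem: Every torsion is cellular}, since $\langle\tc\rangle=\langle\ab_\tc\rangle$) for every cyclic torsion module $A=R/\aideal$ with $R/\aideal\in\tc$. Each term $I^n$ is an injective torsion-free module. Injectivity of $I^n$ kills the higher $\ext$ groups in the hypercohomology spectral sequence, so $\ext^*_R(A,I)$ is computed by the cohomology of the cochain complex $\Hom_R(A,I^0)\to\Hom_R(A,I^1)\to\cdots$; but $\Hom_R(A,I^n)=0$ for all $n$ because $A$ is torsion and $I^n$ is torsion-free. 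Hence $\ext^*_R(A,I)=0$, as required. (This is the place where the definition of torsion-free via $\Hom_R(C,F)=0$ is used, and where the hereditary hypothesis enters through the reduction to cyclic torsion modules.)

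\emph{Step 2: $C$ is $\tc$-cellular.} Form the triangle $C\to M\to I$; by construction $C$ is quasi-isomorphic to the shifted mapping cone of $M\to I$, i.e. to the complex $[\cdots\to 0\to M\to I^0\to I^1\to\cdots]$ with $M$ in degree $0$. This is a bounded-above complex, so by Lemma~\ref{lem: Bounded-above is cellular}(2) it is enough to show every homology group of $C$ is $\tc$-torsion. I would compute these homology groups directly from Construction~\ref{Construction}: the map $M\to I^0$ has kernel $t(M)$ and the image of $d\colon I^{n-1}\to I^n$ is $F^n$, so chasing the definitions $M^{n+1}=I^n/F^n$, $F^{n+1}=M^{n+1}/t(M^{n+1})$ gives $H_0(C)\cong\ker(M\to I^0)=t(M)$ and, for $n\geq 1$, $H_{-n}(C)$ identified with $\ker(I^{n-1}\to I^n)/\im(I^{n-2}\to I^{n-1})$, which unwinds to the torsion submodule $t(M^{n})$ of $M^{n}$ (using that the complex is exact at the torsion-free quotients $F^n$ by the injective-hull construction). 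Either way each homology module is a torsion submodule of some module, hence lies in $\tc$ by closure under submodules. Applying Lemma~\ref{lem: Triangle criterion for nullification} then finishes the proof, and Lemma~\ref{lem: Triangle criterion for nullification} simultaneously yields that $C$ is a $\tc$-cellular approximation of $M$.

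\emph{Main obstacle.} The delicate point is Step 2 — correctly identifying the homology of the total complex of $M\to I$ and checking that the construction is ``exact up to torsion'' at each stage, i.e. that the only failure of exactness of $M\to I^0\to I^1\to\cdots$ is measured by the successive torsion submodules $t(M^n)$. This requires carefully tracking the kernels and images through the inductive definition and using repeatedly that $F^n\hookrightarrow I^n$ is an essential (injective-hull) extension of a torsion-free module, so that $I^n/F^n$ and its torsion radical behave as expected; the torsion-freeness of each $I^n$, already noted in the construction, is what prevents torsion from leaking between stages. Once the homology is pinned down, the appeal to Lemma~\ref{lem: Bounded-above is cellular} and Lemma~\ref{lem: Triangle criterion for nullification} is immediate.
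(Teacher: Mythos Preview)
Your proposal is correct and follows essentially the same approach as the paper: verify that $I$ (and $J$) is $\tc$-null because each term is a torsion-free injective, identify the cofiber $C$ with the complex $M\to I^0\to I^1\to\cdots$, compute its cohomology as $t(M^n)$, and then invoke Lemma~\ref{lem: Bounded-above is cellular} followed by Lemma~\ref{lem: Triangle criterion for nullification}. One minor wrinkle: your opening sentence (``Since $I$ and $J$ are both nullifications\ldots it suffices to treat one of them'') assumes the conclusion, but since you immediately say you will run the same argument for $J$ separately this is harmless; note also that for $J$ the paper computes $H^n(J)$ first and then reads off $H^n(C')$ from the long exact sequence (picking up the torsion module $M_\ff/\im(M)$ in degree~$1$), rather than literally repeating the $I$-argument verbatim.
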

\begin{proof}
We can choose $C$ to be the complex $M \to I^0 \to I^1 \to \cdots$ with $M$ in degree 0. The homology of $C$ is easy to compute: $H^n(C)=t(M^n)$, with $M^n$ as defined in the Nullification Construction~\ref{Construction} above. By Lemma~\ref{lem: Bounded-above is cellular}, the complex $C$ is $\tc$-cellular. The complex $I$ is $\tc$-null, simply because $I$ is composed of torsion-free injective modules. Thus, by Lemma~\ref{lem: Triangle criterion for nullification}, $I$ is a $\tc$-nullification of $M$ and $C$ is a $\tc$-cellular approximation of $M$.

Similar reasoning shows that $J$ is a $\tc$-nullification of $M$. The complex $J$ is $\tc$-null, simply because $J$ is composed of torsion-free injective modules. The homology of $J$ is: $H^n(J)=t(Q^n)$ for $n>0$ and $H^0(J)=M_\ff$. Let $C'$ be a complex such that there is a distinguished triangle $C' \to M \to J$. The long exact sequence in homology yields: $H^0(C')=t(M)$, $H^1(C')= M_\ff/M$ and $H^n(C')=H^{n-1}(J)$ for $n>1$. Note that $M_\ff/M$ is a $\tc$-torsion module. By Lemma~\ref{lem: Bounded-above is cellular}, the complex $C'$ is $\tc$-cellular and hence $J$ is a $\tc$-nullification of $M$ and $C'$ is a $\tc$-cellular approximation of $M$. In particular, $H_0(\Null_\tc M) = H_0(J) \cong M_\ff$.
\end{proof}

\begin{remark}
It follows that the complexes $I$ and $J$ in Lemma~\ref{lem: Construction of Nullification} are isomorphic in the derived category of $R$. In fact, they are isomorphic as complexes. To construct this isomorphism one needs the following property: for any $R$-module $L$ the injective hull of $L_\ff$ and the injective-hull of $L/t(L)$ are the same; this is because $(L/t(L))_\ff = L_\ff$ and $L/t(L)$ is an essential submodule of $(L/t(L))_\ff$ (see~\cite[IX.2.4]{Stenstrom}). Using the aforementioned property it is a simple exercise to construct the isomorphism inductively.
\end{remark}

Using the construction above we can give a different description of $\tc$-nullification, the one shown in Theorem~\ref{the: Nullification as double endomorphism}. Before proving Theorem~\ref{the: Nullification as double endomorphism} it is necessary to note some properties of the functor $\Hom_R(-,E)$.

Let $E$ be an $R$-module and let $\ee$ be the endomorphism ring $\End_R(E)=\Hom_R(E,E)$. The functor $\Hom_R(-,E)$ is a contravariant functor from left $R$-complexes to left $\ee$-complexes. This left $\ee$-action is simply composition on the left with the morphisms in $\ee$. In other words, the left $\ee$-action on $\Hom_R(-,E)$ is induced by the left $\ee$-action on $E$ itself. Moreover, the functor $\Hom_{\ee}(-,E)$ is a contravariant functor, this time from left $\ee$-complexes to left $R$-complexes. Here the left $R$-action on $\Hom_\ee(-,E)$ comes from the left $R$-action on $E$ (which commutes with the left $\ee$-action on $E$). In particular, there is a derived version of this functor: $\mathbf{R}\Hom_\ee(-,E): \Derived_\ee \to \Derived_R$.

\begin{proof}[Proof of Theorem~\ref{the: Nullification as double endomorphism}]
Given an $R$-module $M$, construct a $\tc$-nullification of $M$ in the way prescribed in~\ref{Construction}. This construction results in a cochain complex $I$, with $I^n$ being an injective torsion-free module. Let $E$ be a torsion-free injective $R$-module such that for every $n$, $I^n$ is a direct summand of a finite direct sum of copies of $E$. For example, one can take $E$ to be the product $\prod_n I^n$. Denote by $\ee$ the endomorphism ring $\End_R(E)$.

Now consider the triangle $C \to M \to I$. Since $I$ is a $\tc$-nullification of $M$, $C$ is a $\tc$-cellular approximation of $M$. Applying the functor $\Hom_R(-,E)$ to this triangle yields a triangle in $\Derived_\ee$:
\[ \Hom_R(I,E) \to \Hom_R(M,E) \to \Hom_R(C,E) \]
Since $E$ is injective, $H_i(\Hom_R(C,E)) \cong \Hom_R(H_i (C),E)$. Since the homology groups of $C$ are torsion, $\Hom_R(H_i (C),E)=0$. Therefore the map $\Hom_R(I,E) \to \Hom_R(M,E)$ is a quasi-isomorphism of $\ee$-complexes.

Because $I^n$ is a direct summand of a finite direct sum of copies of $E$, the $\ee$-module $\Hom_R(I^n,E)$ is projective. Thus the map $\Hom_R(I,E) \to \Hom_R(M,E)$ is a projective resolution of $\Hom_R(M,E)$ in the category of $\ee$-modules. We conclude that the complex $\Hom_\ee(\Hom_R(I,E),E)$ is the derived functor $\mathbf{R} Hom_\ee(\Hom_R(M,E),E)$.

Because $I^n$ is a direct summand of a finite direct sum of copies of $E$, one readily sees that the $R$-module \[\Hom_\ee(\Hom_R(I^n,E),E)\] is naturally isomorphic to $I^n$ and therefore $\Hom_\ee(\Hom_R(I,E),E)\cong I$.
\end{proof}

\begin{remark}
As noted in Theorem~\ref{the: Nullification as double endomorphism}, $\Null_\tc R \simeq \mathbf{R}\End_{\ee}(E)$ and therefore
\[ R_\ff \cong H^0 (\Null_\tc R) \cong H^0 (\mathbf{R}\End_{\ee}(E)) = \End_{\ee}(E)\]
This isomorphism recovers~\cite[IX.3.3]{Stenstrom}, where it is stated that there is an injective $R$-module $E$ such that $R_\ff \cong \End_{\End_R(E)}(E)$. Also note that $\Null_\tc R$ is quasi-isomorphic to a differential graded algebra. This also follows from a result of Dwyer~\cite[Proposition 2.5]{DwyerNoncommutative}, where it is shown that for any set of complexes $\ab$, the complex $\Null_\ab R$ is quasi-isomorphic to a differential graded algebra.
\end{remark}

\begin{remark}
Let $X= \cdots \to X_n \to X_{n-1} \to \cdots$ be a complex such that there exists some $m$ for which $X_n=0$ for all $n>m$. Then it is possible to generalize the Nullification Construction~\ref{Construction} to give the $\tc$-nullification of $X$. Moreover, this generalized construction of $\Null_\tc X$ can be done in such a way that for $n>m$ $(\Null_\tc X)_n=0$, while for $n\leq m$ $(\Null_\tc X)_n$ is a finite direct sum of torsion-free injective modules. Therefore $(\Null_\tc X)_n$ is itself a torsion-free injective for $n\leq m$. Now it is easy to see that the proof of Theorem~\ref{the: Nullification as double endomorphism} works for $\Null_\tc X$ as well and yields the same result. Namely, there exists an injective $R$-module $E$ such that
\[ \Null_\tc X \simeq \mathbf{R}\Hom_{\End_R(E)}(\Hom_R(X,E),E) \]
Clearly, this result carries over to any bounded-above complex $X$.
\end{remark}

Say an injective module $E$ is \emph{sufficient to compute the $\tc$-nullification of $M$} if
\[\Null_\tc M \simeq \mathbf{R} Hom_\ee(\Hom_R(M,E),E)\]
where $\ee=\End_R(M)$. Given an $R$-module $M$ one can use the proof of Theorem~\ref{the: Nullification as double endomorphism} to construct an injective module $E$ which is sufficient to compute the $\tc$-nullification of $M$. However there are other injective modules sufficient to compute the $\tc$-nullification of $M$, as shown by the following proposition.
\begin{proposition}
Let $M$ be an $R$-module and let $E$ be an injective cogenerator of $\tc$. Denote by $\ee$ the ring $\End_R(E)$.
\begin{enumerate}
\item If the $\ee$-module $\Hom_R(M,E)$ has a resolution composed of finitely generated projective modules in each degree, then $E$ is sufficient to compute the $\tc$-nullification of $M$.
\item There exists an ordinal $\alpha$ such that the module $E'=\prod_{i<\alpha} E$ is sufficient to compute the $\tc$-nullification of $M$.
\end{enumerate}
\end{proposition}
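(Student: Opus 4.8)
The plan is to reduce both parts to the argument already carried out in the proof of Theorem~\ref{the: Nullification as double endomorphism}, after isolating the two facts on which it rests. The first: if $E$ is an injective cogenerator of $\tc$ and $C$ is any complex whose homology modules are all $\tc$-torsion, then $\Hom_R(C,E)$ is acyclic — because $E$ injective makes $\Hom_R(-,E)$ exact, so $H^n\Hom_R(C,E)\cong\Hom_R(H_nC,E)$, and $E$ torsion-free forces this to vanish; conversely, since $E$ is a cogenerator, acyclicity of $\Hom_R(C,E)$ forces every $H_nC$ to be $\tc$-torsion. The second: for a finitely generated projective left $\ee$-module $P$ the $R$-module $\Hom_\ee(P,E)$ is a direct summand of a finite direct sum of copies of $E$ (because $\Hom_\ee(\ee,E)\cong E$), hence torsion-free injective, and the canonical map $P\to\Hom_R(\Hom_\ee(P,E),E)$ is an isomorphism; conversely, if $Q$ is a direct summand of a finite direct sum of copies of $E$ then $\Hom_R(Q,E)$ is finitely generated projective over $\ee$ and $Q\to\Hom_\ee(\Hom_R(Q,E),E)$ is an isomorphism.

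For part (1) I would start from a resolution $P_\bullet=(\cdots\to P_1\to P_0)\xrightarrow{\epsilon}\Hom_R(M,E)$ by finitely generated projective $\ee$-modules and set $I':=\Hom_\ee(P_\bullet,E)$. By the second fact each $I'^n=\Hom_\ee(P_n,E)$ is torsion-free injective, so $I'$ is a complex of torsion-free injectives in nonnegative cohomological degrees, hence $\tc$-null; and since $P_\bullet$ is a bounded-below complex of projectives it is $K$-projective, so $I'=\mathbf{R}\Hom_\ee(\Hom_R(M,E),E)$. Left-exactness of $\Hom_\ee(-,E)$ identifies $\Hom_\ee(\Hom_R(M,E),E)$ with $H^0(I')$, and composing the evaluation map $M\to\Hom_\ee(\Hom_R(M,E),E)$ with this identification gives a chain map $\mu\colon M\to I'$. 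Let $C'$ be its fiber; since $M$ sits in degree $0$ and $I'$ in degrees $\le0$, a homological-degree count gives $H_n(C')=0$ for $n\ge1$, so $C'$ is bounded above. Next I would dualize by $\Hom_R(-,E)$: under the canonical isomorphism $\Hom_R(I',E)\cong P_\bullet$ supplied degree-wise by the second fact (a chain isomorphism, by naturality), the map $\Hom_R(\mu,E)$ becomes $\epsilon$ — this is the one point needing care, addressed below — hence a quasi-isomorphism; so $\Hom_R(C',E)$ is acyclic, and by the first fact every $H_n(C')$ is $\tc$-torsion. Then Lemma~\ref{lem: Bounded-above is cellular} makes $C'$ $\tc$-cellular, and Lemma~\ref{lem: Triangle criterion for nullification} identifies $I'$ as the $\tc$-nullification of $M$; since $I'=\mathbf{R}\Hom_\ee(\Hom_R(M,E),E)$, this says $E$ is sufficient.

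For part (2) I would take the complex $I=\Null_\tc M$ produced by Construction~\ref{Construction}, whose terms $I^n$ are torsion-free injective. Since $E$ is a cogenerator, for $0\ne x\in I^n$ the cyclic submodule $Rx$ is not $\tc$-torsion (a torsion submodule of a torsion-free module is zero, as $\ff$ is defined via $\Hom_R(C,-)=0$), so $\Hom_R(Rx,E)\ne0$, and injectivity of $E$ extends a nonzero map $Rx\to E$ to a map $I^n\to E$ nonvanishing at $x$; hence the evaluation map embeds $I^n$ as a direct summand of $E^{S_n}$ with $S_n=\Hom_R(I^n,E)$. Choosing an ordinal $\alpha$ with $|\alpha|\ge\sup_n|S_n|$ and putting $E'=\prod_{i<\alpha}E$, each $E^{S_n}$ is a direct summand of $E'$, so each $I^n$ is a direct summand of $E'$, in particular of a finite direct sum of copies of $E'$; and $E'$ is again torsion-free injective. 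Now the proof of Theorem~\ref{the: Nullification as double endomorphism} applies verbatim with $E'$ in place of the auxiliary injective used there: dualizing the triangle $C\to M\to I$ by $\Hom_R(-,E')$ yields a quasi-isomorphism $\Hom_R(I,E')\to\Hom_R(M,E')$ with finitely generated projective terms over $\End_R(E')$, hence a projective resolution, and applying $\Hom_{\End_R(E')}(-,E')$ recovers $I$ term by term; so $\Null_\tc M=I\simeq\mathbf{R}\Hom_{\End_R(E')}(\Hom_R(M,E'),E')$ and $E'$ is sufficient.

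The main obstacle will be the step flagged in part (1): that $\Hom_R(\mu,E)$ coincides with the augmentation $\epsilon$ under the double-duality isomorphism. I expect this to follow formally from the triangle identity $\Hom_R(\mathrm{ev}_M,E)\circ\eta_{\Hom_R(M,E)}=\mathrm{id}_{\Hom_R(M,E)}$ for the dual pair $(\Hom_R(-,E),\Hom_\ee(-,E))$ together with naturality of the double-duality unit $\eta$, so it is purely formal, but it is the place where the bookkeeping is genuinely delicate; everything else is a homological-degree count or a direct appeal to the lemmas already in the paper.
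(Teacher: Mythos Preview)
Your argument is correct and follows essentially the same route as the paper: for part~(1) you dualize a finitely generated projective resolution of $\Hom_R(M,E)$ to obtain a $\tc$-null complex, build the comparison map $\mu$ via evaluation, and check its fiber is bounded-above with torsion homology by dualizing back; for part~(2) you embed each $I^n$ from Construction~\ref{Construction} as a direct summand of a large product $E'$ of copies of $E$ and then rerun the proof of Theorem~\ref{the: Nullification as double endomorphism} (equivalently, invoke part~(1) with $E'$). The only differences are cosmetic: you spell out the $K$-projectivity of $P_\bullet$, the triangle identity verifying $\Hom_R(\mu,E)=\epsilon$, and the cogenerator-based embedding of $I^n$ into a power of $E$, where the paper simply asserts the first two and cites \cite[VI.3.9]{Stenstrom} for the third.
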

\begin{proof}
Let $P$ be a finitely generated projective $\ee$-module, then it is easy to see that $\Hom_R(\Hom_\ee(P,E),E)$ is naturally isomorphic to $P$. Now let $F$ be a projective resolution of $\Hom_R(M,E)$ over $\ee$ and assume $F$ is composed of finitely generated projective modules in each degree. Then $\Hom_R(\Hom_\ee(F,E),E)$ is naturally isomorphic to $F$.

The quasi-isomorphism $\eta:F \to \Hom_R(M,E)$ induces a map $\Hom_\ee(\Hom_R(M,E),E) \to \Hom_\ee(F,E)$. Composing with the natural map $M \to \Hom_\ee(\Hom_R(M,E),E)$ yields a map $\mu:M \to \Hom_\ee(F,E)$. It is easy to see that $\Hom_R(\mu,E)$ is the quasi-isomorphism $\eta$.

Consider the triangle $C \to M \xrightarrow{\mu} \Hom_\ee(F,E)$. Clearly, $\Hom_\ee(F,E)$ is $\tc$-null. Since $\Hom_R(\mu,E)$ is a quasi-isomorphism, $\Hom_R(C,E)$ is quasi-isomorphic to zero. This implies $\Hom_R(H_i(C),E)=0$ for all $i$. Since $E$ is an injective cogenerator for $\tc$, $H_i(C)$ is torsion for all $i$. Clearly $C$ is bounded-above and so, by Lemma~\ref{lem: Bounded-above is cellular}, $C$ is $\tc$-cellular. We conclude that $\Hom_\ee(F,E)$ is a $\tc$-nullfication of $M$ and $E$ is sufficient to compute the $\tc$-nullification of $M$.

We now turn our attention to the second item in the proposition. By~\cite[VI.3.9]{Stenstrom}, every torsion-free module has a monomorphism to some direct product of copies of $E$. In particular, every torsion-free injective is a isomorphic to a direct summand of some direct product of copies of $E$.

Let $I$ be the complex described in the Nullification Construction~\ref{Construction}. Let $E'$ be a direct product of copies of $E$ such that for every $n$, $I^n$ is isomorphic to a direct summand of $E'$. Clearly the $\End_R(E')$-complex $\Hom_R(I,E')$ is a projective resolution of $\Hom_R(M,E')$ which is composed of finitely generated projective modules in every degree. Hence $E'$ is sufficient to compute the $\tc$-nullification of $M$.
\end{proof}

\section{Torsion Theories and Cellular Approximation in Artinian Rings}
\label{sec: Torsion Theories and Cellular Approximation in Artinian Rings}
Throughout this section $R$ is an Artinian ring and $\simples$ is a set of non-isomorphic simple modules of $R$. Define a class $\ff$ of $R$-modules by $\ff=\{F \ |\ \Hom_R(S,F)=0 \text{ for all } S \in \simples\}$
and define a class $\tc$ by setting $ \tc=\{M \ |\ \Hom_R(M,F)=0 \text{ for all } F \in \ff\}$. By~\cite[VIII.3]{Stenstrom}, the pair $(\tc,\ff)$ forms a hereditary torsion theory (alternatively, one can easily deduce this from Lemma~\ref{lem: Cogenerator for torsion of simples} below). Because $R$ is Artinian, every hereditary torsion theory of $R$-modules is generated by a set of simple modules (see~\cite[VIII]{Stenstrom}), so this context covers all hereditary torsion theories over $R$. In this section we give several results regarding $\tc$-nullification. We also give a different proof for a result of Benson \cite[Theorem 5.1]{Benson} in Corollary~\ref{cor: Bensons result}.

Let $\Omega$ be the set of isomorphism classes of simple modules of $R$ and let $\simples'$ be the complement of $\simples$ in $\Omega$. We denote by $E$ the product of the injective hulls of the simple modules in $\simples'$ and denote by $P$ the direct sum of the projective covers of those simple modules. We show that $E$ is an injective cogenerator of $\tc$ and that being $\tc$-cellular is the same as the being $\simples$-cellular.

\begin{lemma}
\label{lem: Cyclic E-complete is simples cellular}
Let $C$ be a cyclic $R$-module such that $\Hom_R(C,E)=0$, then $C$ is $\simples$-cellular.
\end{lemma}
\begin{proof}
Since $R$ is Artinian, $C$ admits a composition series
\[ 0 = C_0 \subset C_1 \subset \cdots \subset C_m=C \ , \]
where all the quotients $C_i/C_{i-1}$ are simple modules. We next show that $C_i/C_{i-1} \in \simples$ for all $i$. Suppose that for some $i$, $C_i/C_{i-1}\cong S'$ for some $S'\in \simples'$. Let $x \in C_i \smallsetminus C_{i-1}$, then the cyclic module generated by $x$ has $S'$ as a quotient. This implies the submodule $Rx$ of $C$ has a non-zero map to $E(S')$ - the injective hull of $S'$. Clearly such a map can be lifted to a non-zero map $C\to E$, in contradiction. Therefore $C_i/C_{i-1}\cong S$ for some $S\in \simples$. Now a simple inductive argument on $i$ shows that $C_i \in \langle\simples\rangle$ for every $i$, and hence $C$ is $\simples$-cellular.
\end{proof}

\begin{corollary}
\label{cor: tc-cell is simples-cell}
A complex $X$ is $\tc$-cellular if and only if $X$ is $\simples$-cellular.
\end{corollary}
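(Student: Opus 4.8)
The plan is to reduce everything to a comparison of localizing subcategories. By Lemma~\ref{lem: Every torsion is cellular} and Corollary~\ref{cor: Consequence of being generated by AT}, the $\tc$-cellular complexes are precisely the objects of $\langle\tc\rangle=\langle\ab_\tc\rangle$, where $\ab_\tc$ is the set of cyclic $\tc$-torsion modules; and since $\simples$ is a set, the $\simples$-cellular complexes are precisely the objects of $\langle\simples\rangle$. Thus it suffices to prove $\langle\ab_\tc\rangle=\langle\simples\rangle$, and by minimality of the localizing subcategory generated by a set this will follow from the two inclusions $\simples\subseteq\langle\ab_\tc\rangle$ and $\ab_\tc\subseteq\langle\simples\rangle$. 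The first is immediate: each $S\in\simples$ is cyclic, and $\Hom_R(S,F)=0$ for every $F\in\ff$ by the definition of $\ff$, so $S\in\tc$ and hence $S\in\ab_\tc$. This already shows every $\simples$-cellular complex is $\tc$-cellular.

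For the reverse inclusion, let $C$ be an arbitrary cyclic $\tc$-torsion module; I want $C\in\langle\simples\rangle$, and by Lemma~\ref{lem: Cyclic E-complete is simples cellular} it is enough to check that $\Hom_R(C,E)=0$. The key observation is that $E$ is torsion-free, i.e. $E\in\ff$: writing $E=\prod_{S'\in\simples'}E(S')$, any nonzero homomorphism from a simple $S\in\simples$ into some factor $E(S')$ would have simple image isomorphic to $S$, forcing $S$ into the socle $S'$ of $E(S')$, which is impossible since $S\not\cong S'$; hence $\Hom_R(S,E)=0$ for all $S\in\simples$ and so $E\in\ff$. Since $C\in\tc$ and $E\in\ff$, the very definition of $\tc$ gives $\Hom_R(C,E)=0$, and Lemma~\ref{lem: Cyclic E-complete is simples cellular} then yields $C\in\langle\simples\rangle$. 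Therefore $\ab_\tc\subseteq\langle\simples\rangle$, so $\langle\ab_\tc\rangle\subseteq\langle\simples\rangle$, which says every $\tc$-cellular complex is $\simples$-cellular.

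Putting the inclusions together gives $\langle\tc\rangle=\langle\ab_\tc\rangle=\langle\simples\rangle$, which is the assertion of the corollary. I do not expect a genuine obstacle: the substantive content is already carried by Lemma~\ref{lem: Cyclic E-complete is simples cellular}, and what remains is the bookkeeping of replacing the cellular classes by their generating sets together with the one-line verification that $E$ is torsion-free. The only point that deserves a little care is that the argument must go through the generating set $\ab_\tc$ rather than attempt to manipulate an arbitrary $\tc$-cellular complex directly, since $\tc$ itself is only a class and the hypothesis of Lemma~\ref{lem: Cyclic E-complete is simples cellular} is about cyclic modules.
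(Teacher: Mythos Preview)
Your proof is correct and follows essentially the same route as the paper: reduce to $\langle\tc\rangle=\langle\simples\rangle$, note $\simples\subset\tc$, and for the other inclusion use Lemma~\ref{lem: Every torsion is cellular} to reduce to cyclic torsion modules and then invoke Lemma~\ref{lem: Cyclic E-complete is simples cellular}. Your version is in fact more careful than the paper's, which simply asserts the last step is ``immediate'' from Lemma~\ref{lem: Cyclic E-complete is simples cellular} without spelling out why a cyclic $\tc$-torsion module $C$ satisfies $\Hom_R(C,E)=0$; you supply the missing verification that $E\in\ff$, an argument the paper only gives later in the proof of Lemma~\ref{lem: Cogenerator for torsion of simples}.
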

\begin{proof}
We need to show that $\langle \tc \rangle = \langle \simples \rangle$. Since $\simples \subset \tc$, we only need to show that $\tc \subset \langle \simples \rangle$. By Lemma~\ref{lem: Every torsion is cellular} it is enough to show that every cyclic $R$-module is $\simples$-cellular, but that is immediate from Lemma~\ref{lem: Cyclic E-complete is simples cellular}.
\end{proof}

\begin{lemma}
\label{lem: Cogenerator for torsion of simples}
The module $E$ is an injective cogenerator for $\tc$.
\end{lemma}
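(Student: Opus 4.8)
The plan is to check the two conditions in the definition of an injective cogenerator for $\tc$: that $E$ is injective, and that an $R$-module $M$ is $\tc$-torsion if and only if $\Hom_R(M,E)=0$. Injectivity is immediate, since $E$ is a product of the injective hulls $E(S')$, $S'\in\simples'$, and an arbitrary product of injective modules is injective.

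For the ``only if'' direction I would first show that $E$ is torsion-free, i.e.\ $E\in\ff$; the implication then follows at once from the definition $\tc=\{M\mid \Hom_R(M,F)=0 \text{ for all }F\in\ff\}$. By definition of $\ff$ it suffices to prove $\Hom_R(S,E)=0$ for each $S\in\simples$, and since $\Hom_R(S,-)$ commutes with products this reduces to $\Hom_R(S,E(S'))=0$ for $S\in\simples$ and $S'\in\simples'$. Here a nonzero homomorphism $S\to E(S')$ would be injective (its kernel being a proper submodule of the simple module $S$), so its image would be a simple submodule of $E(S')$; since $S'$ is essential in $E(S')$ this image meets $S'$ nontrivially, and as both are simple the image equals $S'$. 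That forces $S\cong S'$, contradicting the disjointness of $\simples$ and $\simples'$ inside $\Omega$.

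For the ``if'' direction, assume $\Hom_R(M,E)=0$. Since $\tc$ is closed under coproducts and quotients and $M$ is a quotient of $\bigoplus_{m\in M}Rm$, it is enough to show that each cyclic submodule $Rm$ lies in $\tc$. Because $E$ is injective, restriction along $Rm\hookrightarrow M$ gives a surjection $\Hom_R(M,E)\twoheadrightarrow\Hom_R(Rm,E)$, hence $\Hom_R(Rm,E)=0$. The argument in the proof of Lemma~\ref{lem: Cyclic E-complete is simples cellular} then produces a composition series of $Rm$ whose factors all lie in $\simples$; since $\Hom_R(S,F)=0$ for every $S\in\simples$ and $F\in\ff$, each factor lies in $\tc$, and closure of $\tc$ under extensions gives $Rm\in\tc$ by induction on the length of the series. (Alternatively, one may invoke Corollary~\ref{cor: tc-cell is simples-cell} together with Lemma~\ref{lem: Bounded-above is cellular}(1).) This gives $M\in\tc$ and completes the equivalence.

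I expect no genuine obstacle here; the one place to be careful is the socle-type computation for the injective hull $E(S')$ in the ``only if'' direction, i.e.\ the use of essentiality of $S'$ in $E(S')$ to pin down the simple submodules of $E(S')$, together with keeping straight which closure properties of $\tc$ and which parts of the definitions of $\tc$ and $\ff$ are invoked in the ``if'' direction.
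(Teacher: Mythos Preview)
Your proposal is correct and follows essentially the same route as the paper: both show $E\in\ff$ via $\Hom_R(S,E(S'))=0$, then for the converse reduce to cyclic submodules using injectivity of $E$ and invoke Lemma~\ref{lem: Cyclic E-complete is simples cellular}. The only cosmetic difference is that for the ``if'' direction you primarily use the composition series together with closure of $\tc$ under extensions, whereas the paper passes through $\simples$-cellularity and Lemma~\ref{lem: Bounded-above is cellular}(1); your parenthetical alternative is exactly the paper's argument.
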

\begin{proof}
Let $\mathcal{U}$ be the class of modules $M$ such that $\Hom_R(M,E)=0$. Then $\mathcal{U}$ is a hereditary torsion theory. Because $\Hom_R(S,S')=0$ for every  $S\in \simples$ and $S' \in \simples'$, we see that $\Hom_R(S,E(S'))=0$, where $E(S')$ is the injective envelope of $S'$. Hence $E \in \ff$ and therefore $\mathcal{U}$ contains $\tc$.

Next, let $M$ be in $\mathcal{U}$. To show that $M$ is a $\tc$-torsion module it is enough to show that every cyclic submodule of $M$ is a torsion module, because $M$ is a quotient of the direct sum of its cyclic submodules. So let $C$ be a cyclic submodule of $M$. Since $E$ is injective, it follows that $\Hom_R(C,E)=0$. By Lemma~\ref{lem: Cyclic E-complete is simples cellular} $C$ is $\simples$-cellular. Therefore $C$ is $\tc$-cellular and by Lemma~\ref{lem: Bounded-above is cellular} $C$ is $\tc$-torsion.
\end{proof}

\begin{lemma}
\label{lem: P-null is E-conull}
For any complex $X$, $\ext_R^*(P,X)=0$ if and only if $\ext^*_R(X,E)=0$.
\end{lemma}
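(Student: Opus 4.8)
The plan is to reduce both vanishing conditions to one and the same condition on $X$, namely that $H^n(X)$ is $\tc$-torsion for every $n$. Since $P$ is a projective module placed in degree $0$, the functor $\Hom_R(P,-)$ computes $\mathbf{R}\Hom_R(P,-)$ and is exact, so $\ext^n_R(P,X)\cong\Hom_R(P,H^n(X))$ for every $n$. Dually, since $E$ is an injective module placed in degree $0$, $\Hom_R(-,E)$ computes $\mathbf{R}\Hom_R(-,E)$ and is exact, so $\ext^*_R(X,E)=0$ holds precisely when $\Hom_R(N,E)=0$ for every homology module $N$ of $X$. Thus the lemma reduces to showing, for an arbitrary $R$-module $N$, that
\[ \Hom_R(P,N)=0 \iff N\in\tc \iff \Hom_R(N,E)=0. \]

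The right-hand equivalence is exactly the statement that $E$ is an injective cogenerator for $\tc$, which is Lemma~\ref{lem: Cogenerator for torsion of simples}; nothing new is needed there. For the left-hand equivalence I would write $P=\bigoplus_{S'\in\simples'}P(S')$, where $P(S')$ denotes the projective cover of $S'$. Each $P(S')$ is cyclic with simple top $S'$, and $S'\notin\tc$ because $S'$ embeds in $E(S')$, which lies in $\ff$.

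For the implication $N\in\tc\Rightarrow\Hom_R(P,N)=0$: a nonzero map $P(S')\to N$ has nonzero, finitely generated image contained in $N$, hence $\tc$-torsion; its top is a nonzero quotient of $S'$, so by Nakayama it is isomorphic to $S'$, and closure of $\tc$ under quotients would put $S'$ in $\tc$, a contradiction. For the converse, if $N\notin\tc$ then $\Hom_R(N,E)\neq 0$ by Lemma~\ref{lem: Cogenerator for torsion of simples}, so some coordinate map $N\to E(S')$ is nonzero; its image is a nonzero submodule of $E(S')$ and therefore contains the essential simple submodule $S'$, whence the preimage of $S'$ in $N$ surjects onto $S'$. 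Lifting the projective cover $P(S')\twoheadrightarrow S'$ through this surjection produces a nonzero map $P(S')\to N$, so $\Hom_R(P,N)\neq 0$. Combining the two equivalences yields the lemma.

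I expect the only genuine content to be the equivalence $\Hom_R(P,N)=0\iff N\in\tc$: the two short arguments above really use the structure of projective covers over an Artinian ring — finiteness of composition length, Nakayama's lemma, and essentiality of simple socles in injective hulls — rather than any formal manipulation, so that is where I would take the most care. Converting the two $\ext$-vanishing statements into conditions on homology modules is routine, once one notes that $P$ is its own projective resolution and $E$ its own injective resolution.
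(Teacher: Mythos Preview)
Your reduction to homology modules is exactly what the paper does: since $P$ is projective and $E$ is injective, $\ext_R^*(P,X)\cong\Hom_R(P,H_*(X))$ and $\ext_R^*(X,E)\cong\Hom_R(H^*(X),E)$, so everything comes down to a statement about a single module $N$.

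Where you diverge from the paper is at that module-level step. The paper simply cites Benson's book~\cite[1.7.6 \& 1.7.7]{BensonBookI} for the equivalence $\Hom_R(P,N)=0\Leftrightarrow\Hom_R(N,E)=0$ when $N$ is finitely generated, and then reduces the general module to its finitely generated submodules (using that $P$ is finitely generated projective and $E$ is injective). You instead factor the equivalence through the condition $N\in\tc$, invoking Lemma~\ref{lem: Cogenerator for torsion of simples} for one half and giving a direct argument with projective covers, Nakayama, and essential socles for the other. Your argument is correct: the key points---that any nonzero quotient of $P(S')$ has $S'$ as its top, and that any nonzero submodule of $E(S')$ contains $S'$---are standard facts about projective covers and injective hulls over an Artinian ring, and you use them properly. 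The payoff of your route is that it is self-contained and makes the role of the torsion class $\tc$ transparent; the paper's route is terser but leans on an external reference and an extra reduction to the finitely generated case.
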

\begin{proof}
This is known when $X$ is a finitely generated $R$-module, see Benson's book~\cite[1.7.6 \&1.7.7]{BensonBookI}. Now suppose $X$ is any $R$-module. Since $P$ is a finitely generated projective module, $\Hom_R(P,X)=0$ if and only if $\Hom_R(P,X')=0$ for every finitely generated submodule $X'$ of $X$. Similarly, because $E$ is injective, $\Hom_R(X,E)=0$ if and only if $\Hom_R(X',E)=0$ for every finitely generated submodule $X'$ of $X$. Hence the lemma holds for any $R$-module. Finally, let $X$ be any complex, then $\ext^*_R(P,X)=\Hom_R(P,H_*(X))$. Similarly $\ext^*_R(X,E)=\Hom_R(H^*(X),E)$.
\end{proof}

\begin{corollary}
\label{cor: Bensons result}
For any $R$-module $M$, a $\tc$-nullification of $M$ is also a $P$-completion of $M$ and is therefore given by
\[ \Null_\tc M \simeq \mathbf{R} \Hom_{\End_R(P)} (\Hom_R(P,R),\Hom_R(P,M))\]
\end{corollary}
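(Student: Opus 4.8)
The plan is to identify a $\tc$-nullification of $M$ with a $P$-completion of $M$, and then to obtain the displayed formula by specializing the Dwyer--Greenlees completion formula to the perfect complex $P$. By Definition~\ref{def: cellularization, nullification and completion}, a $P$-completion of $M$ is, up to isomorphism in $\Derived_R$, the unique $P$-complete complex admitting a $P$-equivalence from $M$; so I would prove the first claim by checking that (a) the natural map $M \to \Null_\tc M$ is a $P$-equivalence, and (b) $\Null_\tc M$ is $P$-complete.

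For (a) I would apply $\ext^*_R(P,-)$ to the triangle $\cell_\tc M \to M \to \Null_\tc M$ and show $\ext^*_R(P, \cell_\tc M) = 0$. By Lemma~\ref{lem: P-null is E-conull} this vanishing is equivalent to $\ext^*_R(\cell_\tc M, E) = 0$, which, since $E$ is injective, is $\Hom_R(H^*(\cell_\tc M), E)$; this is zero because the homology of $\cell_\tc M$ is $\tc$-torsion (Lemma~\ref{lem: Bounded-above is cellular}) and $E$ is an injective cogenerator of $\tc$ (Lemma~\ref{lem: Cogenerator for torsion of simples}).

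For (b), let $N$ be any $P$-null complex. By Lemma~\ref{lem: P-null is E-conull}, $\ext^*_R(N,E)=0$, hence $\Hom_R(H_*(N),E)=0$, hence every homology module of $N$ is $\tc$-torsion. I would represent $\Null_\tc M$ by the complex $I$ of Construction~\ref{Construction} (Lemma~\ref{lem: Construction of Nullification}), whose terms $I^k$ are torsion-free injective modules. Then $\ext^*_R(N, I^k) = \Hom_R(H_*(N), I^k) = 0$ for each $k$, since a $\tc$-torsion module has no nonzero map to a torsion-free module. Finally $I$ is the inverse limit of its brutal truncations $I^0 \to \cdots \to I^k$ along a tower of surjections; an induction using the triangles relating successive truncations to the shifted modules $I^k$ gives $\ext^*_R(N,-)=0$ on every truncation, and the associated $\lim^1$ exact sequence propagates this to $I$. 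Hence $\Null_\tc M$ is $P$-complete.

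Granting (a) and (b), $\Null_\tc M$ is a $P$-completion of $M$. Since $P$ is a finitely generated projective $R$-module it is perfect, with $\mathbf{R}\End_R(P) = \End_R(P)$, $\mathbf{R}\Hom_R(P,R) = \Hom_R(P,R)$ and $\mathbf{R}\Hom_R(P,M) = \Hom_R(P,M)$; substituting these into the Dwyer--Greenlees formula for completion with respect to a perfect complex~\cite{DwyerGreenlees}, namely $M^\wedge_P \simeq \mathbf{R}\Hom_{\mathbf{R}\End_R(P)}(\mathbf{R}\Hom_R(P,R), \mathbf{R}\Hom_R(P,M))$, yields the stated expression. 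I expect the step requiring genuine care to be part (b) — in particular the $\ext$-vanishing for the cohomologically unbounded complex $I$, which forces the homotopy-limit ($\lim^1$) argument above — while the only other thing to pin down precisely is the module-action conventions in the Dwyer--Greenlees completion formula so that it specializes as claimed; everything else reduces to invoking results already established in the paper.
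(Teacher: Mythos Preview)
Your proposal is correct and follows essentially the same route as the paper: part (a) is identical, and for part (b) both arguments represent $\Null_\tc M$ by the complex $I$ of Construction~\ref{Construction}, show each $I^k$ is $P$-complete, and then pass to $I$ via a homotopy-limit/Milnor-sequence argument on brutal truncations. The only cosmetic difference is that the paper verifies $P$-completeness of $I^k$ by exhibiting it as a retract of a product of copies of $E$ (using Lemma~\ref{lem: P-null is E-conull} to see $E$ is $P$-complete), whereas you argue directly that any $P$-null $N$ has $\tc$-torsion homology and hence $\Hom_R(H_*(N),I^k)=0$.
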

\begin{proof}
Consider the triangle $\cell_\tc M \to M \xrightarrow{\nu} \Null_\tc M$. Lemma~\ref{lem: Cogenerator for torsion of simples} implies that $E$ is $\tc$-null and therefore $\ext_R^*(\cell_\tc M,E)=0$. By Lemma~\ref{lem: P-null is E-conull}, $\cell_\tc M$ is $P$-null and $\nu$ is a $P$-equivalence.

It remains to show that $\Null_\tc M$ is $P$-complete. Let $I$ be the $\tc$-nullification of $M$ described in~\ref{Construction}. The full subcategory of $P$-complete objects in $\Derived_R$ is closed under isomorphisms, completion of triangles, products and retracts. Denote this subcategory by $\cc$. From Lemma~\ref{lem: P-null is E-conull}, we see that $E \in \cc$ and therefore every product of $E$ is also in $\cc$.  Lemma~\ref{lem: Cogenerator for torsion of simples} and~\cite[VI.3.9]{Stenstrom} imply that every torsion-free module is a submodule of a product of copies of $E$. Since $I^n$ is injective, it is a direct summand of some product of copies of $E$, hence $I^n$ is also an object of $\cc$.

Let $I(n)$ denote the cochain complex $I^0 \to I^1 \to \cdots \to I^n$. An inductive argument shows that $I(n) \in \cc$. There is a triangle $ I \to \prod_n I(n) \xrightarrow{\phi-1} \prod_n I(n)$, where the map $\phi$ is induced by the maps $I(n+1)\to I(n)$. Hence $I$ is $P$-complete.

By Dwyer and Greenlees \cite[Theorem 2.1]{DwyerGreenlees}, the $P$-completion of an $R$-module $M$ is given by
\[ M^\wedge_P \simeq \mathbf{R} \Hom_{\End_R(P)} (\Hom_R(P,R),\Hom_R(P,M))\]
\end{proof}

Corollary~\ref{cor: Bensons result} above implies Benson's formula for $\tc$-cellular approximation given in~\cite[Theorem 5.1]{Benson}. This corollary also explains the connection between Benson's formula and Dwyer and Greenlees formula for $P$-completion from~\cite[Theorem 2.1]{DwyerGreenlees}.

\section{Examples}
\label{sec: Examples}

\begin{example}
\label{exa: Commutative example}
Let $I$ be a two-sided ideal of $R$ such that $I$ is finitely generated as a left $R$-module. An $R$-module $M$ will be called \emph{$I$-torsion} if for every $m\in M$ there exists some $n$ such that $I^n m=0$. It is not difficult to show that the class of $I$-torsion modules forms a hereditary torsion class $\tc$ (see \cite[VI.6.10]{Stenstrom}). Using Lemma~\ref{lem: Every torsion is cellular} it is easy to conclude that $\langle\tc\rangle=\langle R/I \rangle$. Hence $\tc$-cellular approximation is the same as $R/I$-cellular approximation and the same goes for nullification. Note that in this case the radical $t$ associated with $\tc$ has a simple description: for any $R$-module $M$
\[t(M)=\colim_{n\to \infty} \Hom_R(R/I^n,M)\]

Now suppose $R$ is a commutative Noetherian ring. Dwyer and Greenlees have shown in~\cite{DwyerGreenlees} that $R/I$-cellular approximation computes $I$-local cohomology, namely that there is a natural isomorphism $H_I^*(M)\cong H^*(\cell_{R/I} M)$. Recall there is an isomorphism:
\[ H_I^*(M) \cong \colim_{n\to \infty} \ext_R^*(R/I^n,M)\]
These facts show that $\tc$-cellular approximation is the derived functor of the radical $t$. Moreover, in this case an object $X\in \Derived_R$ is $\tc$-cellular if and only $H_n(X)$ is $\tc$-torsion for all $n$, see \cite[6.12]{DwyerGreenlees}.
\end{example}

\begin{example}
\label{exa: Noncommutative}
Here is an example of a case where $\tc$-cellular approximation is not the derived functor of the associated radical. Let $G$ be the symmetric group on 3 elements, let $k$ be the field $\Int/3\Int$ and let $R$ be the group ring $k[G]$. There is an augmentation map $R \to k$, where $k$ has the trivial $G$-action. Let $I$ be the augmentation ideal. As before, denote the class of $I$-torsion modules by $\tc$ and the associated radical by $t$. Since $R$ is an Artinian ring, the sequence $I\supseteq I^2 \supseteq I^3 \supseteq \cdots$ stabilizes. So there is a fixed index $m$ such that $t(M)=\Hom_R(R/I^m,M)$ for every $R$-module $M$. Therefore, the derived functors of the torsion radical $t$ are the functors $\ext_R^*(R/I^m,-)$. In particular, $\ext_R^i(R/I^m,R)=0$ for all $i>0$, because $R$ is injective. On the other hand, a calculation using Benson's methods from~\cite{Benson} shows that $H^n(\cell_\tc R)$ is non-zero for infinitely many values of $n$; thereby showing that $\cell_\tc$ is not the derived functor of $t$. We describe this calculation next.

From the surjection $G \to \Int/2$ one sees that $R$ has two simple modules, the trivial module $k$ and a one dimensional simple module $\omega$. As a left module, $R \cong E_k\oplus E_\omega$ where $E_k$ and $E_\omega$ are the injective hulls of $k$ and $\omega$ respectively. The module $E_k$ has a composition series
\[k \subset B \subset E_k, \quad\text{where } B/k\cong \omega \ \text{ and }\ E_k/B\cong k\]
The composition series for $E_\omega$ is
\[\omega \subset B' \subset E_\omega, \quad\text{where } B'/\omega\cong k \ \text{ and }\ E_\omega/B'\cong \omega\]
In addition $E_\omega/\omega \cong B$ and $E_k/k \cong B'$. Since $E_\omega$ is $k$-null (see Lemma~\ref{lem: Cogenerator for torsion of simples}), then
\[\Null_\tc R \simeq \Null_\tc E_\omega \oplus \Null_\tc E_k \cong E_\omega \oplus \Null_\tc E_k\]
So we need only compute $\Null_\tc E_k$. Applying Construction~\ref{Construction} to the module $E_k$ we get the complex $I$ which is $E_\omega \xrightarrow{d} E_\omega \xrightarrow{d} E_\omega \xrightarrow{d} \cdots$ where $d$ is the composition $E_\omega \twoheadrightarrow \omega \hookrightarrow E_\omega$. Hence $H^n(\Null_\tc E_k)=k$ for $n>1$ and therefore $H^n(\cell_\tc R)$ is non-zero for infinitely many values of $n$. In fact
\[ H^n(\cell_\tc R)=\left\{
     \begin{array}{ll}
       k, & n=0; \\
       0, & n=1; \\
       k, & n>1.
     \end{array}
   \right.\]

It is important to note that in this case, a complex $X$ such that $H_n(X)$ is $\tc$-torsion for all $n$ need not be $\tc$-cellular. Consider, for example, the complex $R^\wedge_\tc$. As we explain below, the homology groups $H_n(R^\wedge_\tc)$ are $\tc$-torsion for all $n$. On the other hand, the $\tc$-equivalences $\cell_\tc R \to R$ and $R \to R^\wedge_\tc$ show that $\cell_\tc R$ is $\tc$-equivalent to $R^\wedge_\tc$. If $R^\wedge_\tc$ was $\tc$-cellular, then $R^\wedge_\tc$ would have been quasi-isomorphic to $\cell_\tc R$, because a $\tc$-equivalence between $\tc$-cellular complexes is a quasi-isomorphism. As we show next, the complex $R^\wedge_\tc$ has no homology in negative degrees and so cannot be quasi-isomorphic to $\cell_\tc R$.

It remains to explain the properties of $R^\wedge_\tc$ used above. From Corollary~\ref{cor: tc-cell is simples-cell} we learn that $R^\wedge_\tc \simeq R^\wedge_k$ and $\cell_\tc R\simeq \cell_k R$. Without going into details, combining \cite[5.9]{DwyerGreenleesIyengar} with \cite[4.3]{DwyerGreenlees} shows that
\[ R^\wedge_k \simeq \mathbf{R} \Hom_R(\cell_k R, R)\]
This immediately implies that $R^\wedge_\tc$ has no homology in negative degrees. We next show that $H_n(R^\wedge_\tc)$ is $\tc$-torsion for all $n$. Since $E_\omega$ is a $\tc$-null module, $\ext^*_R(E_\omega,R^\wedge_\tc)=0$. Recall that $R$ is a group-algebra and therefore $E_\omega$ is also the projective cover of $\omega$. Because $E_\omega$ is projective we have
\[ \ext^{-n}_R(E_\omega, R^\wedge_\tc) \cong \Hom_R(E_\omega, H_n(R^\wedge_\tc))\]
Hence, by Lemma~\ref{lem: P-null is E-conull}, $\ext^*_R(H_n(R^\wedge_\tc),E_\omega)=0$. Lemma~\ref{lem: Cogenerator for torsion of simples} shows $E_\omega$ is an injective cogenerator for $\tc$, therefore $H_n(R^\wedge_\tc)$ is $\tc$-torsion.
\end{example}

\begin{example}
This example relates $\tc$-nullification with Cohn localization. We begin by recalling the definition of Cohn localization. Let $S=\{f_\alpha:P_\alpha \to Q_\alpha\}$ be a set of maps between finitely generated projective $R$-modules. Say a ring map $R\to R'$ is \emph{$S$-inverting} if $\Hom_R(f,R')$ is an isomorphism for every $f \in S$. A \emph{Cohn localization} of $R$ with respect to $S$ is a ring map $R\to S^{-1}R$ which is initial among all $S$-inverting ring maps. Note that the definition given here is not the standard definition (see e.g.~\cite{DwyerNoncommutative}), but it is equivalent to the standard one.

Let $\cc_S$ be the set of cones of the maps $f_\alpha$. In~\cite{DwyerNoncommutative}, Dwyer considers $\cc_S$-nullification and shows that $H_0(\Null_{\cc_S}R)=S^{-1}R$ (see~\cite[3.2]{DwyerNoncommutative}). Combining Dwyer's results with Theorem~\ref{the: Nullification as double endomorphism} yields the following proposition.

\begin{proposition}
\label{pro: Cohn localization}
Let $\tc$ be a hereditary torsion-class of $R$-modules. If $\langle\tc\rangle=\langle\cc_S\rangle$ for some set of maps $S$ between finitely generated projective $R$-modules, then
\begin{enumerate}
\item $\Null_\tc(-) \simeq (-)_\ff$,
\item the module of quotients functor $(-)_\ff$ is exact and
\item there is an isomorphism $S^{-1}R\otimes_R M \cong M_\ff$ for every module $M$.
\end{enumerate}
\end{proposition}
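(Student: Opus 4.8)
The plan is to prove statement (1) first, by a homological argument tailored to the fact that $\cc_S$ consists of cones of maps between \emph{finitely generated} projectives, and then to read off (2) and (3) formally. Since $\langle\tc\rangle=\langle\cc_S\rangle$, the functors $\Null_\tc$ and $\Null_{\cc_S}$ agree (and $\cell_\tc=\cell_{\cc_S}$); in particular, for an $R$-module $M$ the complex $\Null_\tc M$ is $\cc_S$-null. Write $\cc_S=\{\cone(f_\alpha)\}$ with $f_\alpha\colon P_\alpha\to Q_\alpha$, $P_\alpha,Q_\alpha$ finitely generated projective. First I would observe that $\mathbf{R}\Hom_R(\cone(f_\alpha),N)$ is, up to shift, the fibre of $\Hom_R(f_\alpha,N)\colon\Hom_R(Q_\alpha,N)\to\Hom_R(P_\alpha,N)$ — no derived functor is needed since $P_\alpha,Q_\alpha$ are projective — and that, because $P_\alpha,Q_\alpha$ are moreover finitely generated, $\Hom_R(P_\alpha,-)$ and $\Hom_R(Q_\alpha,-)$ are exact and so commute with cohomology. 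Hence $\mathbf{R}\Hom_R(\cone(f_\alpha),N)\simeq 0$ if and only if $\Hom_R(f_\alpha,H^nN)$ is an isomorphism for every $n$.

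Consequently $N$ is $\cc_S$-null precisely when each homology module $H^n(N)$, viewed as a complex concentrated in degree $0$, is $\cc_S$-null; and since the objects $\mathbf{R}\Hom$-orthogonal to a fixed complex form a localizing subcategory, this is the same as $H^n(N)[0]$ being $\tc$-null, which in particular forces $\Hom_R(A,H^nN)=0$ for all $A\in\tc$, i.e.\ $H^n(N)$ torsion-free. Now take $N=\Null_\tc M$. The Nullification Construction (Lemma~\ref{lem: Construction of Nullification}) exhibits $\Null_\tc M$ as a cochain complex of injectives whose cohomology is $M_\ff$ in degree $0$ and a \emph{torsion} module in every positive degree. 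A module that is at once torsion and torsion-free vanishes, so $H^n(\Null_\tc M)=0$ for $n\geq 1$ and $\Null_\tc M\simeq M_\ff$, concentrated in degree $0$; naturality is clear once one notes (as in the proof of Lemma~\ref{lem: Construction of Nullification}) that the augmentation $M\to\Null_\tc M$ induces the unit $M\to M_\ff$ on $H^0$. This proves (1).

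For (2): a short exact sequence $0\to M'\to M\to M''\to 0$ is a distinguished triangle in $\Derived_R$, so applying the triangulated functor $\Null_\tc$ and using (1) yields a triangle $M'_\ff\to M_\ff\to M''_\ff\to M'_\ff[1]$ whose long exact cohomology sequence collapses, all terms being in degree $0$, to $0\to M'_\ff\to M_\ff\to M''_\ff\to 0$; thus $(-)_\ff$ is exact (it was already known to be left exact). For (3): the objects of $\cc_S$ are perfect complexes, hence compact, so $\langle\tc\rangle$ is compactly generated and $\Null_\tc=\Null_{\cc_S}$ is smashing, giving a natural quasi-isomorphism $\Null_\tc M\simeq\Null_\tc R\otimes^{\mathbf{L}}_R M$ (see~\cite{NeemanBook}, \cite{DwyerNoncommutative}). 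By (1) applied to $R$, together with Dwyer's computation $H_0(\Null_{\cc_S}R)=S^{-1}R$ (\cite[3.2]{DwyerNoncommutative}), we get $\Null_\tc R\simeq R_\ff\cong S^{-1}R$, concentrated in degree $0$; hence $M_\ff\simeq\Null_\tc M\simeq S^{-1}R\otimes^{\mathbf{L}}_R M$, and since the left-hand side lies in degree $0$ this forces $\tor_i^R(S^{-1}R,M)=0$ for $i>0$ and $M_\ff\cong S^{-1}R\otimes_R M$. Alternatively one can bypass the smashing input: $(-)_\ff$ and $R_\ff\otimes_R(-)$ are both right exact and preserve colimits — the former because it is a left adjoint — and agree on $R$, hence are naturally isomorphic; cf.~\cite[IX]{Stenstrom} for the $R_\ff$-module structure on $M_\ff$ underlying the comparison map.

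The hard part is (1), and more precisely ruling out the positive cohomology of $\Null_\tc M$: \emph{a priori} the Nullification Construction only tells us that cohomology is torsion. The hypothesis $\langle\tc\rangle=\langle\cc_S\rangle$ with $\cc_S$ built from maps between finitely generated projectives is exactly what makes $\cc_S$-nullity of a complex testable one homology module at a time, so that ``torsion'' can be played off against ``torsion-free''. Without such a hypothesis this fails: Example~\ref{exa: Noncommutative} gives a hereditary torsion class for which $\Null_\tc R$ has nonzero cohomology in infinitely many degrees. So the finite projectivity of the $P_\alpha,Q_\alpha$ is genuinely essential, and identifying this as the pivotal point is, I expect, the main conceptual obstacle; everything after (1) is formal.
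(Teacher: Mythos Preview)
Your proof is correct, and your route to (1) is genuinely different from the paper's. Both arguments start from the Nullification Construction, which already pins $\Null_\tc M$ in nonnegative cohomological degrees with $H^0\cong M_\ff$. To kill the positive cohomology, the paper argues indirectly: by~\cite[3.1]{DwyerNoncommutative}, $\Null_{\cc_S} R$ has no homology in negative (homological) degrees, and by Miller's smashing result $\Null_\tc M\simeq\Null_\tc R\otimes_R^{\mathbf{L}}M$, so the same holds for every $M$; combined with the vanishing in positive degrees from the construction, this forces concentration in degree~$0$. Your argument instead exploits directly that $\cc_S$-nullity is detected on homology modules (the key use of finite projectivity of the $P_\alpha,Q_\alpha$), so each $H^n(\Null_\tc M)$ is $\tc$-null and hence torsion-free, while for $n>0$ the construction says it is torsion; thus it vanishes. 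Your proof of (1) is more elementary in that it avoids both~\cite[3.1]{DwyerNoncommutative} and the smashing property at that stage, and it makes transparent exactly where the finite-projectivity hypothesis enters. The paper's approach, on the other hand, front-loads the smashing identification, which it then reuses for~(3); since you also invoke smashing for~(3), the total external input is comparable. Your treatments of (2) and (3) match the paper's, and your alternative argument for (3) via right-exact colimit-preserving functors agreeing on $R$ is a nice bonus.
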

\begin{proof}
By Theorem~\ref{the: Nullification as double endomorphism}, for every $R$-module $M$ the complex $\Null_\tc M$ has no homology in positive degrees. By~\cite[Proposition 3.1]{DwyerNoncommutative}, $\Null_\tc R$ has no homology in negative degrees. Moreover, a result of Miller \cite{MillerFiniteLocalizations} (see also~\cite[Proposition 2.10]{DwyerNoncommutative}) shows that for every $R$-module $M$, $\Null_\tc M \simeq \Null_\tc R \otimes_R^\mathbf{L} M$. This implies that $\Null_\tc M$ has no homology in negative degrees.

We conclude that for every $R$-module $M$, $\Null_\tc M$ has homology only in degree zero and therefore, by Lemma~\ref{lem: Construction of Nullification}, $\Null_\tc M$ is quasi-isomorphic to $M_\ff$. Since the functor $\Null_\tc$ is exact, so is $(-)_\ff$. Since $\Null_\tc R \simeq \Null_{\cc_S} R$, Dwyer's result \cite[3.2]{DwyerNoncommutative} shows that $R_\ff \cong S^{-1}R$. Finally, the quasi-isomorphism $\Null_\tc M \simeq \Null_\tc R \otimes_R^\mathbf{L} M$ implies $S^{-1}R\otimes_R M \cong M_\ff$.
\end{proof}
\end{example}

\begin{example}
This example is of a topological nature. Let $M$ be a discrete monoid and let $k=\Int/p\Int$ for some prime $p$. The ring $R$ we consider is the monoid ring $R=k[M]$, it has a natural augmentation $R \to k$ with augmentation ideal $I$. We also make the following assumptions:
\begin{enumerate}
\item The classifying space $\mathrm{B}M$ of $M$ has a finite fundamental group.
\item The augmentation ideal $I$ is finitely generated as a left $R$-module.
\item There is a projective resolution $P=\cdots P_2 \to P_1 \to P_0$ of $k$ over $R$ such that every $P_n$ is finitely generated as an $R$-module.
\end{enumerate}
Let $\tc$ be the hereditary torsion class of $I$-torsion $R$-modules, then $\langle\tc\rangle = \langle k \rangle$ and hence $\cell_\tc \simeq \cell_k$. Denote by $R^\vee$ the left $R$-module $\Hom_k(R,k)$. From the results of Dwyer, Greenlees and Iyengar \cite[6.15 and 7.5]{DwyerGreenleesIyengar} it is easy to conclude that $\cell_k R^\vee$ is quasi-isomorphic to the cochain complex (with coefficients in $k$) of a certain space we describe next. Let $(\mathrm{B}M)^\wedge_p$ be the Bousfield-Kan $p$-completion of the classifying space of $M$. The space $\Omega (\mathrm{B}M)^\wedge_p$ is the loop-space of $(\mathrm{B}M)^\wedge_p$. So, $\cell_k R^\vee$ is quasi-isomorphic to $\chains^*(\Omega (\mathrm{B}M)^\wedge_p;k)$ - the singular cochain complex of $\Omega (\mathrm{B}M)^\wedge_p$ with coefficients in $k$. By Theorem~\ref{the: Nullification as double endomorphism}, there exists an injective $R$-module $E$ such that
\[ H^n(\Omega (\mathrm{B}M)^\wedge_p;k) \cong \ext_{\End_R(E)}^{n-1}(\Hom_R(R^\vee,E),E) \text{ for } n\geq 2.\]
\end{example}

\bibliographystyle{amsplain}    
\bibliography{bib2007}          

\end{document}